\documentclass[11pt]{article}
\usepackage[margin=1in]{geometry}
\usepackage{amsmath,amsthm,mathtools}
\usepackage{newtxtext,newtxmath}
\usepackage{graphicx}
\usepackage{booktabs}
\usepackage{microtype}
\usepackage{setspace}
\usepackage[style=authoryear,backend=biber,natbib=true,maxcitenames=2,maxbibnames=99]{biblatex}
\usepackage{hyperref}
\usepackage{enumitem}
\usepackage{caption}
\usepackage{float}
\usepackage{xcolor}

\hypersetup{colorlinks=true,citecolor=black,linkcolor=black,urlcolor=blue}
\newtheorem{theorem}{Theorem}
\newtheorem{proposition}{Proposition}
\newtheorem{corollary}{Corollary}
\theoremstyle{remark}
\newtheorem{remark}{Remark}

\title{\textbf{When Is a Relevance Threshold Statistically Resolvable?}\\[6pt]
Minimax Limits for Effect Classification}
\author{Subir Hait\\
Measurement and Quantitative Methods, College of Education\\
Michigan State University}
\date{}

\begin{document}
\maketitle
\thispagestyle{empty}
\clearpage

\begin{abstract}
Statistical precision and scientific relevance operate on different scales. In regular problems, sampling uncertainty typically contracts at rate $n^{-1/2}$, whereas the magnitude below which an effect is regarded as scientifically negligible may be fixed or may itself vary with information. This paper studies the resulting relative-rate problem.

Let $\Delta_n$ denote a scientific relevance threshold and $I_0$ the Fisher information in a regular scalar model. Define the relevance-resolution index
\[
\lambda_n=\sqrt{nI_0}\,\Delta_n,
\]
which measures the scientific threshold in statistical-error units. For separated negligible and meaningful parameter classes, we establish the minimax lower bound
\[
\liminf_{n\to\infty}R_n^*\ge 2\{1-\Phi(\varepsilon\kappa)\},
\qquad \lambda_n\to\kappa.
\]
If $\lambda_n\to0$, the corresponding experiments merge and no procedure can consistently distinguish scientifically negligible from scientifically meaningful effects. If $\lambda_n\to\kappa\in(0,\infty)$, inference converges to a nondegenerate Gaussian-shift decision problem. We characterize the exact minimax rule and risk in that limiting composite problem. Near the unresolved edge $\kappa\downarrow0$, the optimal cutoff converges to one statistical-error unit and the optimal improvement over trivial risk is of order $\kappa^2$; a simple rule that thresholds at the scientific boundary improves only at order $\kappa^3$. As $\kappa$ grows, that simple rule becomes asymptotically minimax. A complementary achievability result shows that if $\lambda_n\to\infty$, consistent classification is attainable under a uniform estimation-resolution condition, which we verify in Gaussian and Bernoulli models. For $\Delta_n=d n^{-\gamma}$, the critical rate is $\gamma=1/2$.

We further show that moving asymmetric relevance regions are governed by standardized distances to their two boundaries rather than by total width alone, and that differential boundary rates can invalidate conclusions based only on a scalar relevance-resolution index. Finally, for heterogeneous true effects, we derive a significance-saturation result: under a consistent point-null test, the limiting proportion of statistically significant findings that are scientifically negligible is determined by the population mass of nonzero effects inside the relevance region. For the Gaussian point-null rule, the same population mass arises in the opposite low-information limit, revealing a nonmonotone selection phenomenon in which significance can be most selective for scientifically relevant effects at intermediate information.

The framework does not propose a new equivalence test. It characterizes when scientific relevance is statistically resolvable at all, linking effect size, power, local asymptotic theory, minimax decision theory, and practical significance through a common information scale.
\end{abstract}

\noindent\textbf{Keywords:} equivalence testing; effect size; practical significance; statistical power; local asymptotic theory; Hellinger distance; minimax testing; smallest effect size of interest; relevant hypotheses; scientific significance

\clearpage
\section{Introduction}

Increasing information improves statistical precision. Under familiar regularity conditions, estimators stabilize and sampling uncertainty contracts at approximately the $n^{-1/2}$ rate. Confidence intervals narrow, power increases, and increasingly small departures from a point null become statistically detectable.

Scientific importance, however, is not defined by sampling uncertainty. In many applications there is a range of effects sufficiently small to be regarded as clinically, scientifically, economically, educationally, or practically negligible. Let
\[
\mathcal R_n=[-\Delta_n,\Delta_n]
\]
denote such a region, where $\Delta_n>0$ is the smallest magnitude regarded as scientifically meaningful.

The resulting inferential problem contains two scales: statistical resolution and scientific resolution. These scales need not contract at the same rate.

The distinction between statistical significance and substantive importance is old. In an early analysis of the large-sample problem, \citet{berkson1938} warned that departures of little practical consequence can become statistically detectable as information accumulates. \citet{hodges1954} subsequently distinguished statistical from ``material'' significance and developed tests for approximate rather than exact hypotheses. Equivalence and noninferiority methods formalized inference relative to specified margins. More recent methodological guidance has likewise emphasized that statistical significance alone does not quantify substantive importance \citep{wasserstein2016,wasserstein2019}.

The statistics literature has also developed a substantial program of \emph{relevant hypotheses}, replacing exact equality with statements that departures do not exceed a scientifically prespecified threshold. Examples include relevant structural changes in time series and relevant differences in functional data \citep{dette2016,dette2020}.

These developments establish that scientific thresholds can and often should enter the inferential problem explicitly. The present paper addresses a different question:

\begin{quote}
\emph{How small may the scientific relevance threshold become, relative to statistical information, before meaningful and negligible effects cease to be statistically distinguishable?}
\end{quote}

This question is not answered by choosing a particular equivalence test. It is an information question.

Suppose initially that
\[
X_1,\ldots,X_n\stackrel{\mathrm{iid}}{\sim}P_\theta
\]
and the model is regular at $\theta=0$, with Fisher information $0<I_0<\infty$. The central quantity is
\begin{equation}
\lambda_n=\sqrt{nI_0}\,\Delta_n.
\label{eq:lambda}
\end{equation}
The index $\lambda_n$ measures the scientific relevance threshold in local statistical-error units.

The paper makes three main contributions. First, we derive a minimax lower bound for distinguishing scientifically negligible from scientifically meaningful parameter classes:
\begin{equation}
\liminf_{n\to\infty}R_n^*\ge 2\{1-\Phi(\varepsilon\kappa)\},
\qquad \lambda_n\to\kappa.
\label{eq:minimaxbound}
\end{equation}
At $\kappa=0$, no procedure performs asymptotically better than a trivial decision rule. At finite positive $\kappa$, the problem converges to a nondegenerate Gaussian-shift experiment. Inside that limiting experiment we characterize the exact minimax classifier and risk for the separated composite problem. The optimal cutoff approaches one statistical-error unit as $\kappa\downarrow0$, while a simple rule that thresholds at the scientific boundary becomes asymptotically minimax as $\kappa$ grows. A complementary upper result establishes consistent relevance classification when $\lambda_n\to\infty$ and a suitable estimator has uniform statistical resolution. We distinguish throughout between an exact result in the Gaussian limit experiment and finite-$n$ attainability in a general quadratic-mean differentiable model, which requires additional uniform control.

Second, we show that asymmetric or moving scientific regions require a finer geometry. Their behavior is governed by standardized distances from the true parameter to the lower and upper relevance boundaries. A relevance interval can become extremely wide in total statistical-error units while one of its boundaries remains statistically unresolved.

Third, we examine what happens across a population of heterogeneous true effects. As point-null power approaches one for every fixed nonzero parameter, statistical significance eventually saturates the nonzero part of the effect distribution. The fraction of significant results that remain scientifically negligible therefore converges to a quantity determined by the effect distribution itself rather than primarily by the nominal significance level. The path to this limit can be nonmonotone: point-null significance can be most selective for scientifically relevant effects at intermediate information, rather than at either very low or very high information.

The central message is not that increasing sample size is undesirable. More information improves estimation. Rather, greater information exposes the distinction between two inferential questions: $\theta\ne0$ and $|\theta|>\Delta$. The first concerns departure from an exact point. The second concerns departure from a scientific boundary.

\section{Related work and distinction from existing relevance tests}

The use of non-point null hypotheses has a long history. \citet{hodges1954} studied approximate validity of statistical hypotheses and explicitly considered departures too small to have material importance. Early bioequivalence procedures include \citet{anderson1983} and the two one-sided tests procedure of \citet{schuirmann1987}. The connection between intersection--union tests and equivalence confidence sets was clarified by \citet{bergerhsu1996}; \citet{wellek2010} provides a systematic treatment.

Optimality within interval-null and equivalence problems has also been studied extensively. \citet{brown1995} develop confidence-set geometry for bioequivalence, \citet{brown1997} construct an unbiased test that can improve on standard two one-sided tests, and \citet{romano2005} develops asymptotic optimality theory by approximating regular statistical experiments with limiting normal experiments. Romano's objective is to construct optimal tests when the equivalence interval is part of the inferential specification.

A particularly close decision-theoretic precedent is \citet{blanchard2018}, who study Gaussian mean testing of a closed convex null set against alternatives separated from that set by a prescribed Euclidean distance. Their criterion is the same worst-case sum of Type I and Type II errors used below, and their principal target is the minimax separation rate as a function of dimension, noise level, and the geometry of the null set. At fixed $\kappa$, our Gaussian critical problem is the one-dimensional interval-null specialization of their setup with $\mathcal C=[-a,a]$ and separation distance $\rho=b-a$. Our emphasis is different: we track how the scientific margin itself moves relative to statistical information, characterize the three relative-rate regimes, and use the exact scalar Gaussian solution to sharpen the finite-$\kappa$ behavior and compare a scientifically natural boundary rule with the minimax rule. The Gaussian optimization is therefore classical in spirit; the contribution is its role inside the relevance-resolution framework and the resulting comparison of scientific and statistical scales.

The present question is therefore not whether interval hypotheses can be tested optimally in a fixed formulation. We allow the relevance region itself to vary with information and ask whether that region remains statistically resolvable.

A related line of statistics research replaces exact equality by \emph{relevant hypotheses}. \citet{dette2016}, for example, consider change-point hypotheses in which a structural change must exceed a specified threshold to count as relevant. In regression settings, \citet{dette2018} formulate equivalence through a prespecified distance threshold, with later extensions to regression curves sharing common parameters \citep{mollenhoff2020}. \citet{dette2020} develop relevant-hypothesis methods for functional time series. These methods demonstrate the practical and theoretical value of inference relative to nonzero scientific margins.

\citet{berger1987} examined difficulties associated with precise hypotheses, while \citet{perlman1999} emphasized that the quality of a testing procedure depends on the inferential problem and decision criterion rather than on simplistic comparisons of rejection probabilities.

Applied methodological work has made relevance-based inference increasingly accessible. \citet{murphy1999} developed minimum-effect tests in the general linear model. \citet{lakens2018} provided practical guidance for equivalence testing using a smallest effect size of interest. \citet{blume2018} proposed second-generation $p$-values based on the relation between a data-supported interval and an interval null. \citet{lakens2022} explicitly relates sample-size justification to effects researchers consider informative, and \citet{riesthuis2024} develops power analysis for minimum-effect and equivalence questions. \citet{ongaro2024} provide a general framework for testing practical relevance through interval null hypotheses.

The contribution here is therefore not the observation that scientific importance differs from statistical significance, nor the proposal of another interval-null test. Instead, we isolate the relative-rate problem
\[
\frac{\text{scientific threshold}}{\text{statistical uncertainty}}
\]
and study the point at which the corresponding scientific classification problem becomes information-theoretically impossible, locally informative, or consistently resolvable.

\section{Statistical and scientific resolution}

\subsection{Scientific relevance regions}

Let
\[
\mathcal R_n=[-\Delta_n,\Delta_n],\qquad \Delta_n>0,
\]
denote the region of scientifically negligible effects. An effect is scientifically negligible when $|\theta|\le\Delta_n$ and scientifically meaningful when $|\theta|>\Delta_n$.

For descriptive purposes, suppose an estimator $\hat\theta_n$ has statistical-error scale $s_n$. Define
\[
\lambda_n=\frac{\Delta_n}{s_n},
\qquad
u_n=\frac{\theta_n}{s_n}.
\]
If $\theta_n=r_n\Delta_n$, then $u_n=r_n\lambda_n$ with $r_n=\theta_n/\Delta_n$. Thus point-null signal strength decomposes into a scientific location coordinate and a relevance-resolution coordinate.

For regular efficient estimation,
\[
s_n\sim \frac{1}{\sqrt{nI_0}},
\]
so the descriptive ratio $\Delta_n/s_n$ becomes the Fisher-standardized index in \eqref{eq:lambda}.

\subsection{Confidence-set classification}

Consider a symmetric interval
\[
C_n=[\hat\theta_n-cs_n,\hat\theta_n+cs_n],
\]
where $c>0$. We say that the data \emph{establish meaningfulness} when $C_n\cap\mathcal R_n=\emptyset$ and \emph{establish negligibility} when $C_n\subset\mathcal R_n$. Otherwise the result is unresolved.

Under the standardized Gaussian representation $Y\sim N(u,1)$, the probability of establishing meaningfulness is
\begin{equation}
P_M(u,\lambda)
=
\Phi[-(\lambda+c)-u]
+
\Phi[u-(\lambda+c)],
\label{eq:pm}
\end{equation}
whereas the probability of establishing negligibility is
\begin{equation}
P_N(u,\lambda)
=
\left[
\Phi\{(\lambda-c)-u\}
-
\Phi\{-(\lambda-c)-u\}
\right]_+.
\label{eq:pn}
\end{equation}
The unresolved probability is $P_U(u,\lambda)=1-P_M(u,\lambda)-P_N(u,\lambda)$. These are operating characteristics of this particular classifier; they are not minimax information bounds.

\subsection{The effect-information plane}

Using $u=r\lambda$, equations \eqref{eq:pm}--\eqref{eq:pn} can be represented in the $(r,\lambda)$ plane. The scientific relevance boundaries are $r=-1$ and $r=1$.

If a conventional two-sided point-null test rejects when $|\hat\theta_n|>cs_n$, then its approximate significance frontier on the same scale is
\[
|r|=\frac{c}{\lambda}.
\]
As $\lambda$ increases, this frontier collapses toward zero while the scientific boundaries remain fixed at $\pm1$. Hence the region $c/\lambda<|r|<1$ contains effects that are statistically detectable by the point-null procedure while remaining scientifically negligible.

The confidence-set classifier also has a negligibility floor: $\lambda>c$. If $\lambda\le c$, an interval with half-width $cs_n$ cannot fit entirely inside a relevance interval with half-width $\Delta_n$.

\begin{figure}[H]
\centering
\includegraphics[width=0.96\linewidth]{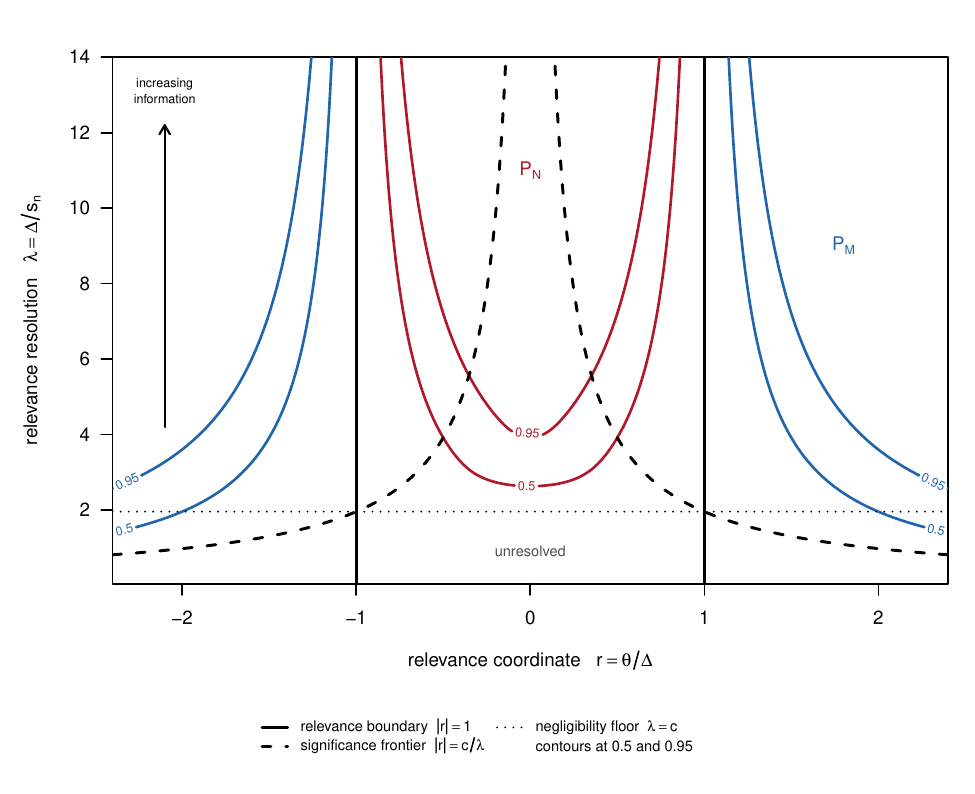}
\caption{\textbf{Effect-information plane.} Probability contours for the confidence-interval classifier in the $(r,\lambda)$ plane, where $r=\theta/\Delta$ and $\lambda=\Delta/s$. Solid vertical lines mark $|r|=1$; dashed curves give the point-null significance frontier $|r|=c/\lambda$; and the dotted horizontal line marks the classifier-specific negligibility floor $\lambda=c$. The contours are finite-information properties of the confidence-interval classifier, not minimax bounds. The asymptotic vertical regimes receive an information-theoretic interpretation in Section~\ref{sec:boundary}.}
\label{fig:plane}
\end{figure}

\section{A statistical resolution boundary}
\label{sec:boundary}

The finite-procedure geometry above motivates a more fundamental question: when can \emph{any} statistical procedure distinguish scientifically negligible effects from scientifically meaningful effects?

Fix $\varepsilon\in(0,1)$. Define separated parameter classes
\[
\mathcal N_n(\varepsilon)
=
\{\theta:|\theta|\le(1-\varepsilon)\Delta_n\},
\qquad
\mathcal M_n(\varepsilon)
=
\{\theta:|\theta|\ge(1+\varepsilon)\Delta_n\}.
\]
The separation by $\varepsilon\Delta_n$ removes the unavoidable ambiguity exactly at the scientific boundary.

Let $\phi_n\in[0,1]$ denote the probability of classifying an observation as scientifically meaningful. Define its worst-case sum-of-errors risk by
\[
R_n(\phi_n)
=
\sup_{\theta\in\mathcal N_n(\varepsilon)}E_\theta\phi_n
+
\sup_{\theta\in\mathcal M_n(\varepsilon)}E_\theta(1-\phi_n),
\]
and let $R_n^*=\inf_{\phi_n}R_n(\phi_n)$. A value $R_n^*=1$ means that asymptotically no procedure improves on a trivial constant decision rule. Consistent classification requires $R_n^*\to0$.

\begin{theorem}[Minimax relevance-resolution lower bound]
\label{thm:minimax}
Suppose $X_1,\ldots,X_n\stackrel{\mathrm{iid}}{\sim}P_\theta$, and the model is differentiable in quadratic mean at $\theta=0$, with Fisher information $0<I_0<\infty$. Let
\[
\lambda_n=\sqrt{nI_0}\,\Delta_n\longrightarrow\kappa\in[0,\infty).
\]
Then, for every fixed $\varepsilon\in(0,1)$,
\[
\liminf_{n\to\infty}R_n^*
\ge
2\{1-\Phi(\varepsilon\kappa)\}.
\]
\end{theorem}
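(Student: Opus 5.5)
The plan is to reduce the composite problem to a two-point test and then use Le Cam's third lemma / LAN to get the limiting Gaussian-shift experiment. Pick the two boundary points $\theta_{0,n}=(1-\varepsilon)\Delta_n\in\mathcal N_n(\varepsilon)$ and $\theta_{1,n}=(1+\varepsilon)\Delta_n\in\mathcal M_n(\varepsilon)$. For any test $\phi_n$,
\[
R_n(\phi_n)\ge E_{\theta_{0,n}}\phi_n+E_{\theta_{1,n}}(1-\phi_n)\ge 1-\|P^n_{\theta_{0,n}}-P^n_{\theta_{1,n}}\|_{TV}.
\]
The infimum over $\phi_n$ of the middle expression equals the right-hand side exactly (Neyman--Pearson / Le Cam). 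So it suffices to show
\[
\limsup_n \|P^n_{\theta_{0,n}}-P^n_{\theta_{1,n}}\|_{TV}\le 2\Phi(\varepsilon\kappa)-1 .
\]
This matches the target: $1-(2\Phi(\varepsilon\kappa)-1)=2\{1-\Phi(\varepsilon\kappa)\}$.

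Write $\theta_{j,n}=h_{j,n}/\sqrt n$ with $h_{0,n}=(1-\varepsilon)\lambda_n/\sqrt{I_0}\to h_0=(1-\varepsilon)\kappa/\sqrt{I_0}$ and $h_{1,n}\to h_1=(1+\varepsilon)\kappa/\sqrt{I_0}$. Differentiability in quadratic mean at $0$ gives local asymptotic normality along convergent local sequences $h_n\to h$. Concretely,
\[
\log\frac{dP^n_{h_n/\sqrt n}}{dP^n_0}=h\,\Delta_{n,0}-\tfrac12 h^2I_0+o_{P_0}(1),\qquad \Delta_{n,0}\rightsquigarrow N(0,I_0).
\]
Hence the experiments $\{P^n_{h/\sqrt n}\}$ converge to the Gaussian shift $\{N(h,I_0^{-1})\}$. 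Convergence of the likelihood ratio pair also follows, and contiguity comes from Le Cam's first lemma.

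Total variation between the two sequences then converges to the total variation between the limits. One way to see this is that $\|P-Q\|_{TV}=E_P|1-dQ/dP|$. The likelihood ratio $dP^n_{\theta_{1,n}}/dP^n_{\theta_{0,n}}$ converges in law under $P^n_{\theta_{0,n}}$, and it is uniformly integrable by contiguity. Alternatively one can bound through Hellinger affinity, but the direct route gives the exact constant. The limit is
\[
\|N(h_0,I_0^{-1})-N(h_1,I_0^{-1})\|_{TV}=2\Phi\!\left(\tfrac12\sqrt{I_0}\,|h_1-h_0|\right)-1=2\Phi(\varepsilon\kappa)-1 ,
\]
since $\sqrt{I_0}(h_1-h_0)/2=\varepsilon\kappa$. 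This completes the bound.

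The case $\kappa=0$ needs separate care, because both local parameters tend to $0$. I will handle it by the same LAN expansion with $h=0$. The likelihood ratio then tends to $1$ in probability under $P_0^n$, so the total variation tends to $0$ and the risk tends to $1$. This is consistent with the formula $2\{1-\Phi(0)\}=1$.

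The main obstacle is justifying LAN along the moving sequences $h_n\to h$, rather than at fixed $h$, under only DQM at $0$. It also requires passing from weak convergence of likelihood ratios to convergence of total variation. For the first, I will use the standard result (van der Vaart, Thm 7.2 and its sequence version) that DQM at $0$ yields LAN uniformly on convergent $h_n$. For the second, I will use the fact that the Bayes-risk functional $E\min(1,L)$ is a bounded continuous function of the likelihood ratio. Its convergence under weak convergence of $L$ then follows directly, with no need for uniform integrability: $1-\tfrac12\|P-Q\|_{TV}=E_P\min(1,dQ/dP)$. This is the cleanest route.
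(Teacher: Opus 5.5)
Your proposal is correct and follows the same route as the paper. Both use the two-point reduction to $(1\pm\varepsilon)\Delta_n$, convergence of the local experiments to the Gaussian shift $N(h,I_0^{-1})$, and the Gaussian value $2\{1-\Phi(\varepsilon\kappa)\}$. You additionally make explicit (LAN along convergent $h_n$, Le Cam's lemmas, and the bounded functional $\min(1,L)$) the risk-convergence step that the paper only cites from van der Vaart's Chapter~9.

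One minor slip concerns normalization. Your first display and final Gaussian formula use $\|P-Q\|_{\mathrm{TV}}=\sup_A|P(A)-Q(A)|$, and under that convention the identities should read $E_P\min(1,dQ/dP)=1-\|P-Q\|_{\mathrm{TV}}$ and, for $Q\ll P$, $E_P|1-dQ/dP|=2\|P-Q\|_{\mathrm{TV}}$. This is harmless because $E_P\min(1,dQ/dP)$ is itself the minimal sum of errors. Also, $\kappa=0$ needs no separate treatment, since the general argument already covers $h_0=h_1=0$.
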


\begin{proof}
Consider the two parameter sequences
\[
\theta_n^-=(1-\varepsilon)\Delta_n\in\mathcal N_n(\varepsilon),
\qquad
\theta_n^+=(1+\varepsilon)\Delta_n\in\mathcal M_n(\varepsilon).
\]
For every decision rule $\phi_n$,
\[
R_n(\phi_n)
\ge
E_{\theta_n^-}\phi_n+E_{\theta_n^+}(1-\phi_n).
\]
Let $h_n^\pm=\sqrt n\,\theta_n^\pm$. Because $\sqrt{nI_0}\Delta_n\to\kappa$,
\[
h_n^\pm\longrightarrow h^\pm
=
\frac{(1\pm\varepsilon)\kappa}{\sqrt{I_0}}.
\]
Quadratic-mean differentiability implies convergence of the corresponding local experiments to the Gaussian-shift experiment
\[
Y\sim N(h,I_0^{-1}).
\]
Standardizing by Fisher information,
\[
T=\sqrt{I_0}\,Y\sim N(t,1),
\qquad t=\sqrt{I_0}\,h,
\]
so the two limiting means are $t^-=(1-\varepsilon)\kappa$ and $t^+=(1+\varepsilon)\kappa$, with separation $2\varepsilon\kappa$.

The reduction to two simple hypotheses is the standard two-point device used in minimax lower bounds \citep[Chapter~2]{tsybakov2009}. Convergence of the two-point local experiments in Le Cam's sense implies convergence of the optimal bounded-loss risk for the corresponding simple testing problem; equivalently, the minimal sum of Type I and Type II errors converges to that of the two-point Gaussian limit experiment \citep[Chapter~9]{vandervaart1998}; see also \citet{lecam2000}. For two unit-variance normal distributions with mean separation $2\varepsilon\kappa$,
\[
\|N(t^-,1)-N(t^+,1)\|_{\mathrm{TV}}
=
2\Phi(\varepsilon\kappa)-1.
\]
Therefore the limiting minimal sum of errors for the two-point problem is $2\{1-\Phi(\varepsilon\kappa)\}$. Because the two-point problem is contained within the composite problem, the result follows.
\end{proof}

\begin{remark}[The role of the separation parameter]
The right-hand side of Theorem~\ref{thm:minimax} decreases with $\varepsilon$. This does not make $\varepsilon$ a tuning parameter to be optimized after the fact. Different values of $\varepsilon$ define different scientific classification problems by changing the distance between the negligible and meaningful parameter classes. The theorem should therefore be interpreted for a prespecified fixed separation margin.
\end{remark}

\begin{remark}[Oscillating relevance thresholds]
The convergence assumption in Theorem~\ref{thm:minimax} is used to identify a single limit experiment. If $\lambda_n$ does not converge, the theorem applies subsequence-wise. In particular, if $\underline\kappa=\liminf_n\lambda_n<\infty$, there exists a subsequence along which $\lambda_n\to\underline\kappa$, so
\[
\limsup_{n\to\infty}R_n^*\ge 2\{1-\Phi(\varepsilon\underline\kappa)\}.
\]
If $\overline\kappa=\limsup_n\lambda_n<\infty$, the same subsequence argument yields the conservative global bound
\[
\liminf_{n\to\infty}R_n^*\ge 2\{1-\Phi(\varepsilon\overline\kappa)\}.
\]
Thus oscillation can be handled without assigning a single artificial resolution regime.
\end{remark}

\subsection{The subcritical regime: impossibility}

If $\lambda_n\to0$, Theorem~\ref{thm:minimax} gives $\liminf_nR_n^*\ge1$. Because the constant rule $\phi_n\equiv0$ has risk exactly one,
\[
R_n^*\to1.
\]
Thus the two scientific classes are asymptotically indistinguishable.

For this regime there is also a direct proof that avoids the Gaussian limit experiment. Define squared Hellinger distance by
\[
h^2(P,Q)=1-\int\sqrt{dP\,dQ}.
\]
Quadratic-mean differentiability gives
\[
h^2(P_\theta,P_0)=\frac18I_0\theta^2\{1+o(1)\}.
\]
For product measures,
\[
h^2(P_\theta^n,P_0^n)
=
1-\{1-h^2(P_\theta,P_0)\}^n.
\]
Take $\theta_n=(1+\varepsilon)\Delta_n$. If $\sqrt n\Delta_n\to0$, then $n h^2(P_{\theta_n},P_0)\to0$ and hence $h(P_{\theta_n}^n,P_0^n)\to0$. Using the convention
\[
\|P-Q\|_{\mathrm{TV}}=\sup_A|P(A)-Q(A)|,
\]
we have
\[
\|P-Q\|_{\mathrm{TV}}\le\sqrt2\,h(P,Q),
\]
and therefore
\[
\|P_{\theta_n}^n-P_0^n\|_{\mathrm{TV}}\to0.
\]
The experiments literally merge.

This result is stronger than an inability to distinguish effects lying just beyond the relevance boundary. Let
\[
b_n=(\sqrt n\Delta_n)^{-1/2}.
\]
Whenever $\sqrt n\Delta_n\to0$, we have $b_n\to\infty$ but
\[
\sqrt n\,b_n\Delta_n=(\sqrt n\Delta_n)^{1/2}\to0.
\]
Thus $\theta_n=b_n\Delta_n$ satisfies $|\theta_n|/\Delta_n\to\infty$ while remaining asymptotically indistinguishable from zero. An effect may therefore exceed the scientific threshold by a diverging factor and still be statistically unresolved if the scientific threshold itself contracts sufficiently faster than the experiment's information scale.

For the confidence-interval classifier of Section~3, this information-theoretic impossibility has a simple procedural shadow. If $r$ is fixed and $\lambda_n\to0$, then $u_n=r\lambda_n\to0$, so
\[
P_M\to2\Phi(-c).
\]
For $c=z_{1-\alpha/2}$,
\begin{equation}
P_M\to\alpha.
\label{eq:alpha-shadow}
\end{equation}
The classifier declares meaningfulness only at its nominal false-positive rate, exactly as one would expect in a regime with no asymptotic discriminatory information.

\subsection{The critical regime: finite scientific information}

Suppose $\lambda_n\to\kappa\in(0,\infty)$. Theorem~\ref{thm:minimax} implies
\[
\liminf_nR_n^*\ge2\{1-\Phi(\varepsilon\kappa)\}>0.
\]
Therefore no procedure can classify the separated scientific classes consistently. The limiting experiment nevertheless contains genuine statistical information. We next solve the corresponding one-dimensional Gaussian composite problem exactly.

For $\kappa>0$ and $0<\varepsilon<1$, define
\[
a=(1-\varepsilon)\kappa,
\qquad
b=(1+\varepsilon)\kappa,
\qquad 0<a<b.
\]
In the experiment $Y\sim N(\mu,1)$, consider
\[
\mathcal N_\infty=\{\mu:|\mu|\le a\},
\qquad
\mathcal M_\infty=\{\mu:|\mu|\ge b\},
\]
and write
\[
R_\infty(\phi)
=
\sup_{|\mu|\le a}E_\mu\phi(Y)
+
\sup_{|\mu|\ge b}E_\mu\{1-\phi(Y)\},
\qquad
R_\infty^*=\inf_\phi R_\infty(\phi).
\]

\begin{proposition}[Exact minimax characterization in the critical Gaussian experiment]
\label{prop:critical-exact}
Let $g_m$ denote the equal mixture of $N(m,1)$ and $N(-m,1)$,
\[
g_m(y)=\frac12\{\varphi(y-m)+\varphi(y+m)\}
=\varphi(y)e^{-m^2/2}\cosh(my).
\]
There is a unique $c^*=c^*(\kappa,\varepsilon)>0$ satisfying
\begin{equation}
e^{-b^2/2}\cosh(bc^*)
=
e^{-a^2/2}\cosh(ac^*).
\label{eq:cstar}
\end{equation}
The rule
\begin{equation}
\phi^*(Y)=\mathbf1\{|Y|>c^*\}
\label{eq:phistar}
\end{equation}
is minimax for $\mathcal N_\infty$ versus $\mathcal M_\infty$. Its exact risk is
\begin{equation}
\begin{aligned}
R_\infty^*(\kappa,\varepsilon)
={}&\Phi(a-c^*)+\Phi(-a-c^*)\\
&+\Phi(c^*-b)-\Phi(-c^*-b).
\end{aligned}
\label{eq:exactrisk}
\end{equation}
\end{proposition}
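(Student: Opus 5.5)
The proof is a least-favorable-prior argument: exhibit a pair of priors, one on each class, whose Bayes test is $\phi^*$ and against which $\phi^*$ is an equalizer. The natural choice is the symmetric two-point priors $\pi_a=\frac12(\delta_a+\delta_{-a})$ on $\mathcal N_\infty$ and $\pi_b=\frac12(\delta_b+\delta_{-b})$ on $\mathcal M_\infty$, whose marginals are $g_a$ and $g_b$. For any rule $\phi$,
\[
R_\infty(\phi)\ge \int\phi\, g_a+\int(1-\phi)\,g_b \ge \int\min(g_a,g_b)=1-\|G_a-G_b\|_{\mathrm{TV}},
\]
and the minimizing rule rejects exactly where $g_b>g_a$. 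I would then show that the risk of $\phi^*$ equals this Bayes lower bound, which gives minimaxity and the value \eqref{eq:exactrisk}.

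\emph{Step 1: existence and uniqueness of $c^*$.} Consider
\[
L(y)=\frac{g_b(y)}{g_a(y)}=e^{-(b^2-a^2)/2}\,\frac{\cosh(by)}{\cosh(ay)}.
\]
It is even, so I restrict to $y\ge0$. At $y=0$ it equals $e^{-(b^2-a^2)/2}<1$, and as $y\to\infty$ it diverges because $b>a$. Its logarithmic derivative is $b\tanh(by)-a\tanh(ay)$. This is positive for $y>0$, since $b>a$ and $\tanh(by)\ge\tanh(ay)>0$. Hence $L$ is strictly increasing on $(0,\infty)$ and crosses $1$ exactly once, at $c^*>0$. That crossing is \eqref{eq:cstar}. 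It follows that $\{g_b>g_a\}=\{|y|>c^*\}$, so the Bayes rule is $\phi^*$.

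\emph{Step 2: the worst cases of $\phi^*$ sit at the boundary points $\pm a$ and $\pm b$.} This is the main obstacle. The acceptance probability is
\[
P_\mu(|Y|\le c^*)=\Phi(c^*-\mu)-\Phi(-c^*-\mu).
\]
It is even in $\mu$, and its derivative in $\mu$ is $-\varphi(c^*-\mu)+\varphi(c^*+\mu)$. For $\mu>0$ this is negative, because $|c^*+\mu|>|c^*-\mu|$ when $c^*>0$. So the acceptance probability is strictly decreasing in $|\mu|$, and two facts follow:
\begin{itemize}
\item $\sup_{|\mu|\le a}E_\mu\phi^*$ is attained at $|\mu|=a$, giving $\Phi(a-c^*)+\Phi(-a-c^*)$;
\item $\sup_{|\mu|\ge b}E_\mu(1-\phi^*)$ is attained at $|\mu|=b$, giving $\Phi(c^*-b)-\Phi(-c^*-b)$.
\end{itemize}
Because $\phi^*$ is symmetric, each of these equals the corresponding prior-averaged error. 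Therefore $R_\infty(\phi^*)=\int\phi^* g_a+\int(1-\phi^*)g_b$, which is the Bayes risk under $(\pi_a,\pi_b)$.

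\emph{Step 3: conclusion.} For every $\phi$, $R_\infty(\phi)$ is at least this Bayes risk, and $\phi^*$ attains it. Hence $\phi^*$ is minimax, $R_\infty^*$ equals its risk, and that risk is the sum of the two expressions in Step 2, which is \eqref{eq:exactrisk}. Randomized rules are already covered, since the Bayes bound holds for any $\phi\in[0,1]$.
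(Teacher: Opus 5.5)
Your proposal is correct and follows essentially the same route as the paper: bound the worst-case risk below by the Bayes risk under the symmetric boundary mixtures $g_a$ and $g_b$, identify the Bayes rule as $\mathbf{1}\{|Y|>c^*\}$ from the monotonicity of $g_b/g_a$ in $|y|$, and then verify that the worst-case errors of $\phi^*$ occur at $|\mu|=a$ and $|\mu|=b$, so the bound is attained. Your explicit derivative computation for the acceptance probability, $\varphi(c^*+\mu)-\varphi(c^*-\mu)<0$, supplies the monotonicity-in-$\mu$ step that the paper asserts without proof.
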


\begin{proof}
For any rule $\phi$, the worst-case risk dominates the average risk under equal boundary mixtures:
\[
R_\infty(\phi)
\ge
E_{g_a}\phi+E_{g_b}(1-\phi).
\]
The infimum of the right-hand side is the simple-versus-simple Bayes risk for $g_a$ against $g_b$, attained by the likelihood-ratio rule that chooses the meaningful class when $g_b(y)>g_a(y)$.

For $x=|y|$,
\[
\frac{g_b(y)}{g_a(y)}
=
e^{-(b^2-a^2)/2}
\frac{\cosh(bx)}{\cosh(ax)}.
\]
For $x>0$, the derivative of its log is
\[
b\tanh(bx)-a\tanh(ax)>0,
\]
because $m\mapsto m\tanh(mx)$ is strictly increasing for $m>0$. At $x=0$ the ratio is smaller than one, whereas it diverges as $x\to\infty$. Hence there is a unique crossing $c^*>0$, characterized by \eqref{eq:cstar}, and the Bayes rule is \eqref{eq:phistar}.

It remains to verify that the boundary mixtures are least favorable for the composite problem. For any fixed $c>0$ and $\mu\ge0$,
\[
P_\mu(|Y|>c)=\Phi(\mu-c)+\Phi(-\mu-c)
\]
is increasing in $\mu$. Therefore the largest false-positive probability over $|\mu|\le a$ occurs at $|\mu|=a$, and the largest false-negative probability over $|\mu|\ge b$ occurs at $|\mu|=b$. For the likelihood-ratio threshold $c^*$, the composite worst-case risk consequently equals the boundary-mixture Bayes risk. The lower bound above is therefore attained, proving minimaxity. Substituting the two boundary error probabilities yields \eqref{eq:exactrisk}.
\end{proof}

\begin{corollary}[The unresolved-edge expansion]
\label{cor:smallkappa}
As $\kappa\downarrow0$,
\begin{equation}
c^*
=
1+\frac{1+\varepsilon^2}{6}\kappa^2+O(\kappa^4),
\label{eq:cstar-small}
\end{equation}
and
\begin{equation}
R_\infty^*(\kappa,\varepsilon)
=
1-4\varepsilon\varphi(1)\kappa^2+O(\kappa^4).
\label{eq:rstar-small}
\end{equation}
\end{corollary}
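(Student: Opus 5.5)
The plan is a direct Taylor expansion in $\kappa^2$, using the characterization of $c^*$ in Proposition~\ref{prop:critical-exact}. Both \eqref{eq:cstar} and \eqref{eq:exactrisk} are even in $a$ and $b$, and $a^2,b^2$ are multiples of $\kappa^2$. So everything should be a smooth function of $s=\kappa^2$, which is why the remainders are $O(\kappa^4)$ and not $O(\kappa^3)$.

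\emph{Step 1: the cutoff.} Take logarithms in \eqref{eq:cstar}:
\[
\log\cosh(bc)-\log\cosh(ac)=\tfrac12(b^2-a^2),\qquad b^2-a^2=4\varepsilon\kappa^2,\qquad a^2+b^2=2(1+\varepsilon^2)\kappa^2 .
\]
Use the entire expansion $\log\cosh x=x^2/2-x^4/12+O(x^6)$ and divide by $\tfrac12(b^2-a^2)$. Noting that $b^4-a^4=(b^2-a^2)(b^2+a^2)$, this gives
\[
F(c,s):=c^2-\tfrac{(1+\varepsilon^2)s}{3}\,c^4+O(s^2c^6)-1=0,\qquad s=\kappa^2,
\]
where $F$ is real-analytic in $(c,s)$ near $(1,0)$. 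We have $F(1,0)=0$ and $\partial_cF(1,0)=2\neq0$. The implicit function theorem therefore gives a smooth root $c(s)$ with $c(0)=1$ and $c'(0)=(1+\varepsilon^2)/6$, which is \eqref{eq:cstar-small}.

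The one point needing care is that this local root is indeed the unique $c^*$ of Proposition~\ref{prop:critical-exact}. That uniqueness came from strict monotonicity of the log-ratio in $x$, so any root found locally is the root. The remaining concern is to rule out $c^*$ escaping to infinity as $\kappa\downarrow0$. For this, show that $G(x)=\{\log\cosh(bx)-\log\cosh(ax)\}/(2\varepsilon\kappa^2)$ is bounded above by a function tending to $x^2$ uniformly on compacts, and is at least of order $x/\kappa$ for large $x$. Then $G(2)>1$ for small $\kappa$, which traps $c^*\in(1/2,2)$. I expect this bookkeeping to be the main (mild) obstacle.

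\emph{Step 2: the risk.} Write $\psi(\mu,c)=P_\mu(|Y|>c)=\Phi(\mu-c)+\Phi(-\mu-c)$. Then \eqref{eq:exactrisk} reads
\[
R_\infty^*=\psi(a,c^*)+1-\psi(b,c^*).
\]
The function $\psi$ is even and smooth in $\mu$, with $\partial_\mu\psi=\varphi(\mu-c)-\varphi(\mu+c)$ and $\partial_\mu^2\psi(0,c)=2c\varphi(c)$. Hence
\[
\psi(\mu,c)=2\Phi(-c)+c\varphi(c)\mu^2+O(\mu^4),
\]
uniformly for $c\in[1/2,2]$. The constant terms $2\Phi(-c^*)$ cancel exactly, leaving
\[
R_\infty^*=1-c^*\varphi(c^*)(b^2-a^2)+O(\kappa^4)=1-4\varepsilon\kappa^2\,c^*\varphi(c^*)+O(\kappa^4).
\]
Finally, substitute $c^*=1+O(\kappa^2)$ from Step 1. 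This gives $c^*\varphi(c^*)=\varphi(1)+O(\kappa^2)$, since $\tfrac{d}{dc}\{c\varphi(c)\}=(1-c^2)\varphi(c)$ vanishes at $c=1$. That yields \eqref{eq:rstar-small}.
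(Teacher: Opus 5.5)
Your proof is correct. Step~1 is the paper's computation: take logs in \eqref{eq:cstar}, expand $\log\cosh$ to fourth order, and match the $\kappa^4$ coefficient. The difference is that the paper simply posits $c^*=1+d\kappa^2+O(\kappa^4)$ and solves for $d$, whereas your implicit-function argument justifies that form. The trapping bookkeeping you anticipate is unnecessary. For small $s=\kappa^2>0$ the implicit-function root $c(s)$ is a positive solution of \eqref{eq:cstar}, so by the uniqueness in Proposition~\ref{prop:critical-exact} it \emph{is} $c^*$, and $c^*\to1$ follows at once.

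Step~2 takes a genuinely different route from the paper. The paper does not use $c^*$ for the risk at all. It writes $R_\infty^*=1-\|g_b-g_a\|_{\mathrm{TV}}$, expands $g_b-g_a=2\varepsilon\kappa^2(y^2-1)\varphi(y)+O(\kappa^4)$, and reads off the constant as $\varepsilon\kappa^2\int|y^2-1|\varphi(y)\,dy=4\varepsilon\varphi(1)\kappa^2$. This ties the constant to the sign change of the local contrast at $|y|=1$. However, it requires controlling a remainder in $g_m/\varphi$ that is only pointwise in $y$, which the paper handles with an integrable-envelope remark.

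You instead expand the closed form \eqref{eq:exactrisk} through the smooth even function $\psi(\mu,c)$. There the cancellation of $2\Phi(-c^*)$ is exact and the remainders are elementary. Your route is stronger than you use. Since $|\partial_\mu^4\psi|\le 2\sup|\varphi'''|$ for all $(\mu,c)$, your expansion gives worst-case risk $1-4\varepsilon\kappa^2\,c\varphi(c)+O(\kappa^4)$ for the rule $\mathbf1\{|Y|>c\}$, uniformly in $c>0$. Because $R_\infty^*$ is the minimum of this over $c$ and $\max_c c\varphi(c)=\varphi(1)$, you get \eqref{eq:rstar-small} without Step~1 at all. The same formula also explains why the optimal cutoff tends to one unit, and at $c=\kappa$ it recovers the $\kappa^3$ rate of Proposition~\ref{prop:critical-simple}.
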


The limiting cutoff in \eqref{eq:cstar-small} has a useful interpretation. As the relevance scale collapses toward zero, the optimal rule does \emph{not} chase the scientific boundary to zero; it stabilizes at one statistical-error unit. Indeed, for small $m$,
\[
\frac{g_m(y)}{\varphi(y)}
=
1+\frac{m^2}{2}(y^2-1)+O(m^4),
\]
so the leading local contrast between the two symmetric boundary mixtures changes sign at $|y|=1$. This second-order structure explains both the one-unit cutoff and the quadratic improvement in \eqref{eq:rstar-small}.

The exact minimax rule is informative, but the simpler scientific-boundary threshold remains useful.

\begin{proposition}[The scientific-boundary rule]
\label{prop:critical-simple}
Define
\[
\phi_\kappa(Y)=\mathbf1\{|Y|>\kappa\}.
\]
Its worst-case sum-of-errors risk is
\begin{equation}
S_\varepsilon(\kappa)
=
2\Phi(-\varepsilon\kappa)
+
\Phi\{-(2-\varepsilon)\kappa\}
-
\Phi\{-(2+\varepsilon)\kappa\}.
\label{eq:S}
\end{equation}
Moreover, $S_\varepsilon(\kappa)<1$ for every $\kappa>0$, and, as $\kappa\downarrow0$,
\begin{equation}
S_\varepsilon(\kappa)
=
1-4\varepsilon\varphi(0)\kappa^3+O(\kappa^5).
\label{eq:cubic}
\end{equation}
\end{proposition}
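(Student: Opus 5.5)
We compute the two worst-case error probabilities of $\phi_\kappa$ directly. We use the monotonicity fact already established in the proof of Proposition~\ref{prop:critical-exact}: for fixed $c>0$, $P_\mu(|Y|>c)=\Phi(\mu-c)+\Phi(-\mu-c)$ is increasing in $\mu\ge0$. With $c=\kappa$, the supremum of the false-positive probability over $|\mu|\le a$ is attained at $\mu=a=(1-\varepsilon)\kappa$. It equals $\Phi(-\varepsilon\kappa)+\Phi\{-(2-\varepsilon)\kappa\}$. Likewise the supremum of $P_\mu(|Y|\le\kappa)$ over $|\mu|\ge b$ is attained at $\mu=b=(1+\varepsilon)\kappa$. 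It equals $\Phi(-\varepsilon\kappa)-\Phi\{-(2+\varepsilon)\kappa\}$. Adding these two gives \eqref{eq:S}.

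For the strict inequality, we rewrite $S_\varepsilon(\kappa)-1$ as a difference of two Gaussian interval probabilities of equal length $2\varepsilon\kappa$:
\[
S_\varepsilon(\kappa)-1=-\int_{-\varepsilon\kappa}^{\varepsilon\kappa}\varphi(x)\,dx+\int_{-(2+\varepsilon)\kappa}^{-(2-\varepsilon)\kappa}\varphi(x)\,dx
=-\int_{-\varepsilon\kappa}^{\varepsilon\kappa}\{\varphi(x)-\varphi(x-2\kappa)\}\,dx .
\]
For $|x|\le\varepsilon\kappa$ we have $|x-2\kappa|\ge(2-\varepsilon)\kappa>\varepsilon\kappa\ge|x|$, since $\varepsilon<1$. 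Because $\varphi$ is strictly decreasing in $|x|$, the integrand is strictly positive on the interval, so $S_\varepsilon(\kappa)<1$ for every $\kappa>0$.

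For the small-$\kappa$ expansion, define
\[
F(t)=\int_{-\varepsilon\kappa}^{\varepsilon\kappa}\varphi(x+t)\,dx,
\]
so that $S_\varepsilon(\kappa)-1=-\{F(0)-F(-2\kappa)\}$. By the symmetry of $\varphi$ and of the interval, $F$ is even, so its odd derivatives vanish at $0$. Taylor expansion to fourth order therefore gives
\[
F(-2\kappa)-F(0)=2\kappa^2F''(0)+O\bigl(\kappa^4\sup|F''''|\bigr).
\]
Here $F''(0)=\varphi'(\varepsilon\kappa)-\varphi'(-\varepsilon\kappa)=-2\varepsilon\kappa\,\varphi(\varepsilon\kappa)=-2\varepsilon\kappa\varphi(0)+O(\kappa^3)$. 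Also $\sup_t|F''''(t)|\le2\varepsilon\kappa\sup|\varphi''''|=O(\kappa)$. Hence $F(0)-F(-2\kappa)=4\varepsilon\varphi(0)\kappa^3+O(\kappa^5)$, which gives \eqref{eq:cubic}.

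The main point needing care is the bookkeeping of error orders in this last step. The length of the integration interval contributes a factor $\kappa$, and this factor must be tracked in both the leading term and the remainder. That is what makes the remainder $O(\kappa^5)$ rather than merely $O(\kappa^4)$. The evenness of $F$, which kills the $\kappa^3$ Taylor term, is what makes this work.
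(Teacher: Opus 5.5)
Your proof is correct and follows the paper's route. You find the worst cases at the boundaries $|\mu|=a$ and $|\mu|=b$ via monotonicity of $P_\mu(|Y|>c)$, and you write $1-S_\varepsilon(\kappa)$ as the difference of two equal-length Gaussian interval probabilities (you shift the second interval to the negative side, the paper to the positive side). You then Taylor-expand for the cubic term; your evenness argument for $F$ and your tracking of the factor $\kappa$ from the interval length in the remainder make explicit the bookkeeping the paper compresses into ``Taylor expansion around zero.''
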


\begin{proof}
For $\mu\ge0$, the rejection probability $P_\mu(|Y|>\kappa)$ is increasing in $\mu$. Hence the worst false-positive probability over the negligible class occurs at $\mu=a=(1-\varepsilon)\kappa$ and equals
\[
\Phi(-\varepsilon\kappa)+\Phi\{-(2-\varepsilon)\kappa\}.
\]
Likewise, the worst false-negative probability over the meaningful class occurs at $\mu=b=(1+\varepsilon)\kappa$ and equals
\[
\Phi(-\varepsilon\kappa)-\Phi\{-(2+\varepsilon)\kappa\}.
\]
Adding gives \eqref{eq:S}.

Furthermore,
\[
\begin{aligned}
1-S_\varepsilon(\kappa)
&=
\int_{-\varepsilon\kappa}^{\varepsilon\kappa}\varphi(x)\,dx
-
\int_{(2-\varepsilon)\kappa}^{(2+\varepsilon)\kappa}\varphi(t)\,dt\\
&=
\int_{-\varepsilon\kappa}^{\varepsilon\kappa}
\{\varphi(x)-\varphi(x+2\kappa)\}\,dx>0,
\end{aligned}
\]
because $|x+2\kappa|>|x|$ on the integration interval. Taylor expansion around zero gives \eqref{eq:cubic}.
\end{proof}

Corollary~\ref{cor:smallkappa} and Proposition~\ref{prop:critical-simple} show that the simple rule loses one order of improvement only near the unresolved edge:
\[
1-R_\infty^*(\kappa,\varepsilon)=O(\kappa^2),
\qquad
1-S_\varepsilon(\kappa)=O(\kappa^3).
\]
This is a local statement as $\kappa\downarrow0$, not a general indictment of the simple rule. It also shows why the two-point lower bound in Theorem~\ref{thm:minimax}, although sufficient for the phase transition, is not locally sharp in the composite critical problem: as $\kappa\downarrow0$,
\[
2\{1-\Phi(\varepsilon\kappa)\}
=
1-2\varepsilon\varphi(0)\kappa+O(\kappa^3),
\]
whereas the exact composite minimax risk departs from one only at order $\kappa^2$. The boundary mixtures, rather than a single same-sign pair of points, capture the correct unresolved-edge difficulty.

\begin{corollary}[Large-$\kappa$ recovery]
\label{cor:largekappa}
As $\kappa\to\infty$ with $\varepsilon\in(0,1)$ fixed,
\[
\frac{c^*}{\kappa}\to1,
\qquad
R_\infty^*(\kappa,\varepsilon)
\sim
2\Phi(-\varepsilon\kappa),
\qquad
S_\varepsilon(\kappa)
\sim
R_\infty^*(\kappa,\varepsilon).
\]
Hence the scientific-boundary rule is asymptotically minimax as the critical Gaussian problem itself becomes increasingly well resolved.
\end{corollary}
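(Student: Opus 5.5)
We need to prove Corollary~\ref{cor:largekappa}: $c^*/\kappa\to1$, $R_\infty^*\sim 2\Phi(-\varepsilon\kappa)$, and $S_\varepsilon\sim R_\infty^*$.

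The plan is to sandwich $R_\infty^*$ between two explicit quantities. The upper bound is immediate: $R_\infty^*\le S_\varepsilon(\kappa)$, because $\phi_\kappa$ is a particular rule. From \eqref{eq:S},
\[
S_\varepsilon(\kappa)=2\Phi(-\varepsilon\kappa)+O\bigl(\Phi(-(2-\varepsilon)\kappa)\bigr).
\]
Mills' ratio gives $\Phi(-(2-\varepsilon)\kappa)/\Phi(-\varepsilon\kappa)\to0$, since $(2-\varepsilon)^2>\varepsilon^2$ when $\varepsilon<1$. Hence $S_\varepsilon(\kappa)\sim2\Phi(-\varepsilon\kappa)$.

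For the lower bound I would avoid Theorem~\ref{thm:minimax}, whose two-point bound $2\{1-\Phi(\varepsilon\kappa)\}$ tends to $0$ but is not written in that form. Instead I use the exact risk \eqref{eq:exactrisk} and locate $c^*$. Write \eqref{eq:cstar} as
\[
\tfrac{b^2-a^2}{2}=\log\cosh(bc^*)-\log\cosh(ac^*).
\]
Since $\log\cosh x=|x|-\log2+O(e^{-2|x|})$, and $c^*$ is bounded below uniformly (e.g.\ $c^*\ge$ some constant by monotonicity of the log-ratio, with $c^*\to\infty$ to be confirmed), we get
\[
(b-a)c^*=\tfrac{b^2-a^2}{2}+o(1).
\]
This requires $ac^*\to\infty$. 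It holds because the right side $\tfrac{b^2-a^2}{2}=2\varepsilon\kappa^2\to\infty$, while $\log\cosh(bx)-\log\cosh(ax)\le(b-a)x$ forces $c^*\ge(b+a)/2=\kappa$. Therefore
\[
c^*=\kappa+o(1/\kappa),
\]
which gives $c^*/\kappa\to1$ (more precisely $c^*-\kappa\to0$).

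Substituting into \eqref{eq:exactrisk}, the dominant terms are
\[
\Phi(a-c^*)+\Phi(c^*-b)=\Phi(-\varepsilon\kappa+\delta)+\Phi(-\varepsilon\kappa-\delta),
\qquad \delta=c^*-\kappa=o(1/\kappa).
\]
The remaining terms $\Phi(-a-c^*)$ and $\Phi(-c^*-b)$ are $O(\Phi(-(2-\varepsilon)\kappa))$, which is negligible as above. The key step is the relative stability of the Gaussian tail under small shifts:
\[
\frac{\Phi(-\varepsilon\kappa\pm\delta)}{\Phi(-\varepsilon\kappa)}\approx e^{\pm\varepsilon\kappa\delta}\to1,
\]
which holds exactly because $\kappa\delta\to0$. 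This yields $R_\infty^*\sim2\Phi(-\varepsilon\kappa)$, and combined with the first paragraph, $S_\varepsilon\sim R_\infty^*$.

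The main obstacle is controlling $c^*-\kappa$ at the precision $o(1/\kappa)$ rather than merely $o(\kappa)$. A first-order statement $c^*/\kappa\to1$ alone does not suffice for tail ratios, since a shift $\delta$ changes $\Phi(-\varepsilon\kappa+\delta)$ by the factor $e^{\varepsilon\kappa\delta}$. I would handle this with the explicit error $O(e^{-2ac^*})$ in the $\log\cosh$ expansion, which is exponentially small. Dividing by $b-a=2\varepsilon\kappa$ then gives $\delta=O(e^{-2ac^*}/\kappa)$, far better than needed.

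An alternative, cleaner route is a minimax sandwich. Minimaxity gives $R_\infty^*\le S_\varepsilon$. The rule $|Y|>c$ has risk at least $\Phi(a-c)+\Phi(c-b)$, and this is minimized over $c$ at $c=\kappa$ by symmetry and log-concavity of $\Phi$. So $R_\infty^*\ge2\Phi(-\varepsilon\kappa)$, since $\phi^*$ has this form. This avoids the $\delta$ estimate for the risk claims entirely, and I would present it first, using the $\log\cosh$ argument only for $c^*/\kappa\to1$.
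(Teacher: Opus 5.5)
Your argument is correct, and your treatment of $c^*$ is essentially the paper's. The appendix also rewrites \eqref{eq:cstar} via $\log\cosh z=z-\log2+\log(1+e^{-2z})$ as $2\varepsilon\kappa(c^*-\kappa)=\log(1+e^{-2ac^*})-\log(1+e^{-2bc^*})$. It gets $c^*>\kappa$ from positivity of the right side (your $\tanh\le1$ argument is equivalent) and concludes $0<c^*-\kappa\le e^{-2(1-\varepsilon)\kappa^2}/(2\varepsilon\kappa)$.

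The difference is in the lower half of the risk sandwich. The paper takes the two-point bound directly in the Gaussian experiment, $R_\infty^*\ge1-\|N(a,1)-N(b,1)\|_{\mathrm{TV}}=2\{1-\Phi(\varepsilon\kappa)\}$, and squeezes it against $R_\infty^*\le S_\varepsilon(\kappa)$. Your reason for avoiding this is a misreading: $2\{1-\Phi(\varepsilon\kappa)\}$ is identically $2\Phi(-\varepsilon\kappa)$. So it is exactly the lower bound you want, valid for every rule. It yields $0\le R_\infty^*-2\Phi(-\varepsilon\kappa)\le\Phi\{-(2-\varepsilon)\kappa\}-\Phi\{-(2+\varepsilon)\kappa\}$ with no information about $c^*$ at all.

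Your first route (exact risk \eqref{eq:exactrisk} plus tail stability) works. However, it ties the risk claim to the delicate estimate $\kappa(c^*-\kappa)\to0$, which the squeeze makes unnecessary.

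Your second route is also valid, and in fact it rediscovers the two-point bound. The quantity $\Phi(a-c)+\Phi(c-b)$ is the error sum of the one-sided rule $\mathbf1\{Y>c\}$ for $N(a,1)$ versus $N(b,1)$, minimized at the Neyman--Pearson cutoff $c=\kappa$. The cost is that you obtain it only over threshold rules, so you must rely on Proposition~\ref{prop:critical-exact} to know $\phi^*$ is one. For that minimization, what does the work is $\frac{d}{dc}\{\Phi(a-c)+\Phi(c-b)\}=\varphi(c-b)-\varphi(c-a)$, whose sign is that of $|c-a|-|c-b|$. That is symmetry and unimodality of $\varphi$, not log-concavity of $\Phi$.
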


The threshold equation gives a sharper description. Using
$\log\cosh z=z-\log2+\log(1+e^{-2z})$ for $z>0$, equation \eqref{eq:cstar} implies $c^*>\kappa$ and
\[
0<c^*-\kappa
\le
\frac{e^{-2(1-\varepsilon)\kappa^2}}{2\varepsilon\kappa}.
\]
Thus the optimal and scientific-boundary cutoffs become exponentially close on the large-$\kappa$ scale.

\begin{remark}[Limit experiment versus finite-$n$ minimax risk]
\label{rem:transfer}
Proposition~\ref{prop:critical-exact} is an exact characterization of the \emph{limiting Gaussian experiment}. Theorem~\ref{thm:minimax} transfers a lower bound from that local experiment under quadratic-mean differentiability. A matching general statement $R_n^*\to R_\infty^*$ does not follow from pointwise LAN alone. The meaningful class $\mathcal M_n(\varepsilon)$ extends beyond bounded local neighborhoods, so an upper transfer additionally requires uniform control in the local region together with a condition ensuring that more distant alternatives are no harder. We therefore do not claim general finite-$n$ attainability under quadratic-mean differentiability alone.
\end{remark}

\begin{corollary}[Exact Gaussian-location realization]
\label{cor:gaussian-finite}
Suppose $X_i\stackrel{\mathrm{iid}}{\sim}N(\theta,\sigma^2)$ with known $\sigma$. Let
\[
\lambda_n=\frac{\sqrt n\,\Delta_n}{\sigma},
\qquad
Y_n=\frac{\sqrt n\,\bar X}{\sigma}.
\]
For every $n$, the separated classes map exactly to
\[
|\mu|\le(1-\varepsilon)\lambda_n
\qquad\text{and}\qquad
|\mu|\ge(1+\varepsilon)\lambda_n,
\]
where $Y_n\sim N(\mu,1)$. Consequently the threshold rule
\[
\phi_n^*=\mathbf1\{|Y_n|>c^*(\lambda_n,\varepsilon)\}
\]
is finite-sample minimax, with risk equal to \eqref{eq:exactrisk} after replacing $\kappa$ by $\lambda_n$. In particular, if $\lambda_n\to\kappa\in(0,\infty)$, then $R_n^*\to R_\infty^*(\kappa,\varepsilon)$ exactly in this model.
\end{corollary}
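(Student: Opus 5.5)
The plan is to reduce the finite-$n$ Gaussian problem exactly to the limit experiment of Proposition~\ref{prop:critical-exact}, using sufficiency and a bijective reparametrization, and then to conclude by continuity in $\kappa$.

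\textbf{Step 1 (sufficiency).} In the model $N(\theta,\sigma^2)$ with $\sigma$ known, $\bar X$ is sufficient for $\theta$. Hence for any test $\phi_n(X_1,\ldots,X_n)$ the Rao--Blackwellized rule $\tilde\phi_n=E[\phi_n\mid\bar X]$ does not depend on $\theta$ and satisfies $E_\theta\tilde\phi_n=E_\theta\phi_n$ for every $\theta$. It therefore has the same worst-case risk $R_n$. So it suffices to take the infimum over (possibly randomized) functions of $Y_n=\sqrt n\bar X/\sigma$, which is distributed as $N(\mu,1)$ with $\mu=\sqrt n\theta/\sigma$.

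\textbf{Step 2 (mapping the classes).} The map $\theta\mapsto\mu=\sqrt n\theta/\sigma$ is a strictly increasing linear bijection. It sends $|\theta|\le(1-\varepsilon)\Delta_n$ exactly onto $|\mu|\le(1-\varepsilon)\lambda_n$, and $|\theta|\ge(1+\varepsilon)\Delta_n$ exactly onto $|\mu|\ge(1+\varepsilon)\lambda_n$. Both suprema in $R_n$ therefore coincide with the suprema in $R_\infty$ with $\kappa$ replaced by $\lambda_n$. Hence $R_n^*=R_\infty^*(\lambda_n,\varepsilon)$ for every $n$.

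Proposition~\ref{prop:critical-exact} is stated for an arbitrary $\kappa>0$, so it applies verbatim with $\kappa=\lambda_n$. It shows that $\mathbf 1\{|Y_n|>c^*(\lambda_n,\varepsilon)\}$ attains this infimum, with risk \eqref{eq:exactrisk} evaluated at $a=(1-\varepsilon)\lambda_n$ and $b=(1+\varepsilon)\lambda_n$. No asymptotics are involved at this stage; it is the exact reparametrization the paper already anticipated in Remark~\ref{rem:transfer}, with no need for uniform LAN control.

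\textbf{Step 3 (convergence).} To show $R_n^*\to R_\infty^*(\kappa,\varepsilon)$ when $\lambda_n\to\kappa\in(0,\infty)$, I would prove that $\kappa\mapsto R_\infty^*(\kappa,\varepsilon)$ is continuous on $(0,\infty)$. Two routes are available.

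\begin{itemize}
\item The first route is through the closed form. Write $F(c,\kappa)=\log\cosh(bc)-\log\cosh(ac)-(b^2-a^2)/2$. The proof of Proposition~\ref{prop:critical-exact} shows $\partial_cF=b\tanh(bc)-a\tanh(ac)>0$ at $c=c^*$. The implicit function theorem then makes $c^*$ continuous, indeed smooth, in $\kappa$. Substituting into \eqref{eq:exactrisk}, which is continuous in $(a,b,c^*)$, gives continuity of the risk.
\item The second route, as a cleaner alternative, uses the Bayes-risk representation $R_\infty^*=1-\|g_a-g_b\|_{\mathrm{TV}}$. Total variation between these Gaussian mixtures is continuous in $(a,b)$ by Scheff\'e's lemma, since the densities vary continuously pointwise.
\end{itemize}

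The main obstacle, such as it is, lies only in Step 3: justifying that $c^*$ depends continuously on $\kappa$. The second route sidesteps this entirely, so I would use it as the primary argument and cite the implicit function argument as confirmation.
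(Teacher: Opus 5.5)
Your proposal is correct and follows the route the paper leaves implicit: the corollary is presented as an immediate consequence of the exact reparametrization $\mu=\sqrt n\,\theta/\sigma$ together with Proposition~\ref{prop:critical-exact} applied at $\kappa=\lambda_n$. The paper does not spell out two steps, namely the sufficiency reduction to rules depending only on $Y_n$ and the continuity of $\kappa\mapsto R_\infty^*(\kappa,\varepsilon)$ needed for the final limit; your Rao--Blackwell step and your Scheff\'e argument via $R_\infty^*=1-\|g_a-g_b\|_{\mathrm{TV}}$ supply both correctly.
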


\subsection{Operating characteristics at the boundary rate}

Suppose $\theta_n=r\Delta_n$ with fixed $r$ and $\lambda_n\to\kappa$. Then $u_n\to r\kappa$. Consequently, the confidence-set classifier has limiting probabilities $P_M(r\kappa,\kappa)$, $P_N(r\kappa,\kappa)$, and $P_U(r\kappa,\kappa)$ given by Section~3.2. The boundary-rate experiment is therefore the formal asymptotic interpretation of a horizontal slice through the finite-$\lambda$ effect-information plane. Neither statistical uncertainty nor the scientific threshold dominates; they remain comparable.

\subsection{The supercritical regime: achievability}

Theorem~\ref{thm:minimax} is a finite-local-information result and is stated for $\lambda_n\to\kappa<\infty$. Its lower-bound function $2\{1-\Phi(\varepsilon\kappa)\}$ vanishes as the finite local information level $\kappa$ grows, so the corresponding two-point argument becomes asymptotically uninformative in a supercritical sequence $\lambda_n\to\infty$. A separate upper result shows that consistent resolution is then achievable under an appropriate uniform estimation condition.

\paragraph{Uniform Resolution Condition.}
Suppose an estimator $\hat\theta_n$ and deterministic scale $s_n\downarrow0$ satisfy
\begin{equation}
\lim_{M\to\infty}\limsup_{n\to\infty}
\sup_{\theta\in\mathcal N_n(\varepsilon)\cup\mathcal M_n(\varepsilon)}
P_\theta\bigl(|\hat\theta_n-\theta|>Ms_n\bigr)=0.
\label{eq:urc}
\end{equation}
If a data-dependent scale $\hat s_n$ is used, assume also that $\hat s_n/s_n\to1$ uniformly in probability over the same parameter classes; when the scale is deterministic, set $\hat s_n=s_n$.

\begin{proposition}[Uniform achievability]
\label{prop:uniform}
Under the Uniform Resolution Condition, if
\[
\frac{\Delta_n}{s_n}\to\infty,
\]
then, for every fixed $0\le c<\infty$, the interval
\[
C_n=[\hat\theta_n-c\hat s_n,\hat\theta_n+c\hat s_n]
\]
satisfies
\[
\inf_{\theta\in\mathcal N_n(\varepsilon)}P_\theta(C_n\subset\mathcal R_n)\to1
\]
and
\[
\inf_{\theta\in\mathcal M_n(\varepsilon)}P_\theta(C_n\cap\mathcal R_n=\emptyset)\to1.
\]
\end{proposition}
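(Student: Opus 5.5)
The plan is to reduce both coverage statements to a single uniform event on which the estimator is within a few error units of the truth and the data-dependent scale is close to $s_n$. Fix $\eta>0$. The Uniform Resolution Condition \eqref{eq:urc} supplies an $M<\infty$ and an $n_0$ such that, for $n\ge n_0$,
\[
\sup_{\theta\in\mathcal N_n(\varepsilon)\cup\mathcal M_n(\varepsilon)}P_\theta\bigl(|\hat\theta_n-\theta|>Ms_n\bigr)\le\eta.
\]
The uniform ratio assumption gives, for $n$ large,
\[
\sup_\theta P_\theta\bigl(\hat s_n>2s_n\bigr)\le\eta .
\]
This bound is trivial when $\hat s_n=s_n$. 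Let $E_n$ be the event $\{|\hat\theta_n-\theta|\le Ms_n,\ \hat s_n\le 2s_n\}$. Then $\inf_\theta P_\theta(E_n)\ge1-2\eta$ uniformly over both classes. On $E_n$ the interval satisfies
\[
C_n\subset[\theta-(M+2c)s_n,\ \theta+(M+2c)s_n].
\]

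Everything then rests on a deterministic margin argument. Put $K=M+2c$. Because $\Delta_n/s_n\to\infty$, for all large $n$ we have $Ks_n<\varepsilon\Delta_n$.

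For the negligible class, take $|\theta|\le(1-\varepsilon)\Delta_n$. On $E_n$ every point of $C_n$ has modulus at most $(1-\varepsilon)\Delta_n+Ks_n<\Delta_n$, so $C_n\subset\mathcal R_n$.

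For the meaningful class, take $|\theta|\ge(1+\varepsilon)\Delta_n$. By the symmetry of the argument we may assume $\theta\ge(1+\varepsilon)\Delta_n$. Then every point of $C_n$ is at least $(1+\varepsilon)\Delta_n-Ks_n>\Delta_n$, so $C_n\cap\mathcal R_n=\emptyset$.

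Since the margin condition does not depend on $\theta$, both infima are at least $1-2\eta$ for all large $n$. Letting $\eta\downarrow0$ gives the two limits.

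There is no real analytic obstacle. The step needing care is the order of quantifiers. First, $M$ must be chosen from $\eta$ using the iterated limit in \eqref{eq:urc}, not for each $n$ separately. Second, the uniform-in-probability convergence of $\hat s_n/s_n$ has to be read as
\[
\sup_\theta P_\theta\bigl(|\hat s_n/s_n-1|>\delta\bigr)\to0
\]
for each $\delta>0$, so that the event $E_n$ holds uniformly. It is also worth noting explicitly that the separation $\varepsilon\Delta_n$ is what absorbs the $O(s_n)$ error. Without it, parameters at the boundary $|\theta|=\Delta_n$ would not be classified consistently, which is why the statement is restricted to $\mathcal N_n(\varepsilon)$ and $\mathcal M_n(\varepsilon)$.
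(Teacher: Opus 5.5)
Your proof is correct and is the same argument as the paper's: the separation $\varepsilon\Delta_n$ between each class and the nearest boundary of $\mathcal R_n$ absorbs an error of order $s_n$ uniformly in $\theta$, because $s_n=o(\Delta_n)$. You spell out what the paper compresses into \emph{uniform tightness at scale $s_n$}, namely the choice of $M$ from $\eta$ via the iterated limit in \eqref{eq:urc} and the uniform control $\hat s_n\le 2s_n$ of the data-dependent scale.
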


\begin{proof}
Every point in $\mathcal N_n(\varepsilon)$ lies at least $\varepsilon\Delta_n$ from the closest boundary of $\mathcal R_n$. The inclusion $C_n\subset\mathcal R_n$ therefore holds whenever
\[
|\hat\theta_n-\theta|+c\hat s_n<\varepsilon\Delta_n.
\]
Uniform tightness at scale $s_n$, together with $s_n=o(\Delta_n)$, makes the probability of this event converge uniformly to one. The same argument applies to $\mathcal M_n(\varepsilon)$, because every meaningful parameter lies at least $\varepsilon\Delta_n$ beyond its closest scientific boundary.
\end{proof}

\paragraph{Two model checks.}
The Uniform Resolution Condition is deliberately stated separately from pointwise asymptotic normality because it must be verified for the model and parameter range at hand. Two elementary examples show that the condition is not vacuous.

First, if $X_i\stackrel{\mathrm{iid}}{\sim}N(\theta,\sigma^2)$ with known $\sigma$, take $\hat\theta_n=\bar X$ and $s_n=\sigma/\sqrt n$. Then
\[
\frac{\hat\theta_n-\theta}{s_n}\sim N(0,1)
\]
for every $n$ and every $\theta$. Hence
\[
\sup_{\theta}P_\theta\bigl(|\hat\theta_n-\theta|>Ms_n\bigr)
=2\Phi(-M)\longrightarrow0
\]
as $M\to\infty$, so \eqref{eq:urc} holds globally and exactly.

Second, let $X_i\stackrel{\mathrm{iid}}{\sim}\mathrm{Bernoulli}(p)$ and parameterize $\theta=p-p_0$ for a fixed reference value $p_0\in(0,1)$. With $\hat\theta_n=\bar X-p_0$ and the deterministic scale $s_n=n^{-1/2}$, Hoeffding's inequality gives
\[
\sup_{0\le p\le1}
P_p\bigl(|\hat\theta_n-\theta|>M n^{-1/2}\bigr)
\le 2e^{-2M^2}.
\]
Thus \eqref{eq:urc} also holds uniformly in this non-Gaussian model. The deterministic scale $n^{-1/2}$ differs only by a fixed positive factor from the Fisher-standardized scale used in the relevance-resolution index: at $p=p_0$, $I_0=\{p_0(1-p_0)\}^{-1}$ and hence $I_0^{-1/2}n^{-1/2}=\sqrt{p_0(1-p_0)}\,n^{-1/2}$. Rescaling $M$ by this constant therefore gives the same Uniform Resolution Condition on the Fisher-information scale. These examples are illustrations rather than a generic theorem: for other regular models, especially with estimated scales or unbounded parameter spaces, the required uniformity must be checked rather than inferred from pointwise asymptotic normality.

\begin{corollary}[The square-root boundary]
\label{cor:sqrt}
For a regular estimator satisfying $s_n\asymp n^{-1/2}$, let
\[
\Delta_n=d n^{-\gamma},\qquad d>0.
\]
Then $\lambda_n\asymp n^{1/2-\gamma}$. Therefore $\gamma>1/2$ is the information-theoretic impossibility regime, $\gamma=1/2$ is the nondegenerate local regime, and $\gamma<1/2$ is consistently resolvable under the Uniform Resolution Condition. Equivalently, $\Delta_n=o(n^{-1/2})$ cannot be resolved consistently, $\Delta_n\asymp n^{-1/2}$ produces finite limiting scientific information, and $\Delta_n\gg n^{-1/2}$ is consistently resolvable under a uniformly rate-consistent estimator.
\end{corollary}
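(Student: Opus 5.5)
The plan is to reduce everything to the three-regime trichotomy already established, by computing $\lambda_n$ for the power-law threshold. Since $s_n\asymp n^{-1/2}$ and $\Delta_n=dn^{-\gamma}$, we have $\Delta_n/s_n\asymp d\,n^{1/2-\gamma}$. Because $I_0$ is a fixed positive constant, the Fisher-standardized index $\lambda_n=\sqrt{nI_0}\,\Delta_n=d\sqrt{I_0}\,n^{1/2-\gamma}$ differs from $\Delta_n/s_n$ by a factor bounded away from zero and infinity. Hence $\lambda_n\asymp n^{1/2-\gamma}$. The three cases $\gamma>1/2$, $\gamma=1/2$, $\gamma<1/2$ correspond respectively to $\lambda_n\to0$, $\lambda_n\equiv d\sqrt{I_0}=:\kappa\in(0,\infty)$, and $\lambda_n\to\infty$.

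For $\gamma>1/2$, I will invoke Theorem~\ref{thm:minimax} with $\kappa=0$. It gives $\liminf_nR_n^*\ge 2\{1-\Phi(0)\}=1$. Combined with the constant rule $\phi_n\equiv0$, which has risk one, this gives $R_n^*\to1$, as in the subcritical discussion. Alternatively, the Hellinger argument applies directly, since $\sqrt n\Delta_n\to0$. Either route shows that no procedure is consistent, so this is the impossibility regime.

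For $\gamma=1/2$, the index converges to $\kappa=d\sqrt{I_0}>0$. Theorem~\ref{thm:minimax} then gives the strictly positive lower bound $2\{1-\Phi(\varepsilon\kappa)\}$, so consistency fails. The limiting experiment is the nondegenerate Gaussian problem of Proposition~\ref{prop:critical-exact}. Its risk $R_\infty^*(\kappa,\varepsilon)$ is strictly less than one; this follows, for instance, from $S_\varepsilon(\kappa)<1$ in Proposition~\ref{prop:critical-simple}, which bounds it from above. That is the sense of ``finite limiting scientific information.'' I will phrase the claim at the level of the limit experiment, because Remark~\ref{rem:transfer} warns that finite-$n$ convergence $R_n^*\to R_\infty^*$ requires extra uniformity, as realized exactly in Corollary~\ref{cor:gaussian-finite}.

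For $\gamma<1/2$, $\Delta_n/s_n\to\infty$, and Proposition~\ref{prop:uniform} applies directly under the Uniform Resolution Condition. The interval classifier establishes negligibility uniformly over $\mathcal N_n(\varepsilon)$ and meaningfulness uniformly over $\mathcal M_n(\varepsilon)$, each with probability tending to one. Taking $\phi_n=\mathbf 1\{C_n\cap\mathcal R_n=\emptyset\}$ then gives $R_n(\phi_n)\to0$.

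The equivalent $o(n^{-1/2})$, $\asymp n^{-1/2}$, $\gg n^{-1/2}$ formulation follows by the same argument applied to general sequences rather than exact powers. In the $\asymp$ case with a non-convergent ratio, I would pass to subsequences as in the oscillation remark. The only delicate point is this last case: bounded-but-oscillating $\lambda_n$ does not give a single limit. I will handle it by the subsequence bound $\liminf R_n^*\ge2\{1-\Phi(\varepsilon\overline\kappa)\}>0$, which still rules out consistency. The rest is bookkeeping.
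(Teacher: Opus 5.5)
Your proposal is correct and follows the argument the paper implicitly relies on (the corollary is stated without a separate proof). You compute $\lambda_n=d\sqrt{I_0}\,n^{1/2-\gamma}$, invoke Theorem~\ref{thm:minimax} for $\gamma\ge1/2$ (with $\kappa=0$ plus the trivial rule, or with $\kappa=d\sqrt{I_0}$), and use Proposition~\ref{prop:uniform} for $\gamma<1/2$. Your additional care is consistent with the paper: you state the critical regime only at the level of the limit experiment per Remark~\ref{rem:transfer}, you turn the interval conclusions into a test with $R_n(\phi_n)\to0$, and you use the oscillation remark for non-convergent $\lambda_n$ in the $\asymp n^{-1/2}$ case.
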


\begin{figure}[H]
\centering
\includegraphics[width=0.98\linewidth]{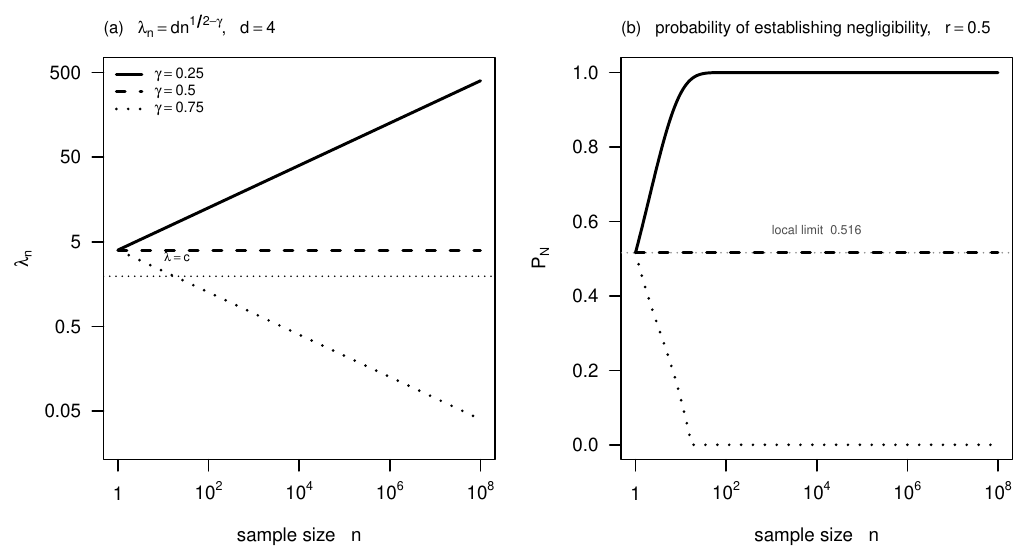}
\caption{\textbf{Relevance-resolution phase transition.} For thresholds $\Delta_n=d n^{-\gamma}$, the regular relevance-resolution index satisfies $\lambda_n\propto n^{1/2-\gamma}$. Panel (a) displays trajectories below, at, and above the critical rate $\gamma=1/2$. Panel (b) shows the corresponding confidence-set classification probabilities. At $\gamma=1/2$, the limiting probability is nondegenerate because the problem converges to a finite Gaussian-shift experiment.}
\label{fig:phase}
\end{figure}

\section{Statistical significance and scientific negligibility can both become certain}

The resolution framework clarifies a familiar large-sample phenomenon without treating it as a paradox. Suppose the relevance threshold is fixed, $\Delta_n=\Delta>0$, and let
\[
0<|\theta|<\Delta.
\]
The true effect is nonzero but scientifically negligible.

For any consistent point-null test of $H_0:\theta=0$,
\[
P_\theta(\text{reject }H_0)\to1.
\]
To apply Proposition~\ref{prop:uniform} explicitly, define
\[
\varepsilon_\theta=1-\frac{|\theta|}{\Delta}.
\]
Because $0<|\theta|<\Delta$, we have $\varepsilon_\theta\in(0,1)$ and $|\theta|=(1-\varepsilon_\theta)\Delta$, so $\theta\in\mathcal N_n(\varepsilon_\theta)$ for every $n$. Since a fixed $\Delta$ satisfies $\Delta/s_n\to\infty$ under regular increasing information, Proposition~\ref{prop:uniform} gives
\[
P_\theta\bigl(C_n\subset[-\Delta,\Delta]\bigr)\to1.
\]
Hence
\begin{equation}
P_\theta(\text{point-null significant and scientifically negligible})\to1.
\label{eq:simultaneous}
\end{equation}
The two conclusions are not contradictory. The point-null test asks whether $\theta=0$; the relevance classifier asks whether $|\theta|\le\Delta$. Both questions can eventually be answered with near certainty.

This observation also explains why merely changing the significance level does not solve the scientific problem. Replacing $\alpha$ by $\alpha_n$ changes the evidential requirement for distinguishing a parameter from zero. Introducing $\Delta$ changes the scientific hypothesis itself.

\begin{figure}[H]
\centering
\includegraphics[width=0.96\linewidth]{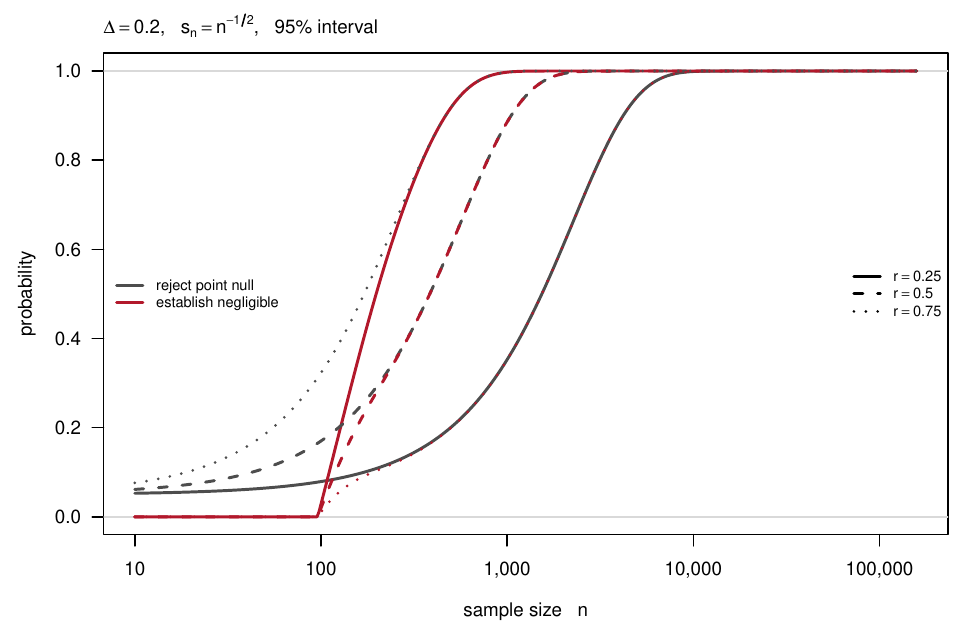}
\caption{\textbf{Simultaneous certainty.} For effects strictly inside the scientific relevance region but different from zero, both the probability of rejecting a point null and the probability of establishing negligibility converge to one. The two conclusions answer distinct questions. The illustration uses $\Delta=0.20$, $s_n=n^{-1/2}$, and a 95\% interval.}
\label{fig:simultaneous}
\end{figure}

\section{Moving and asymmetric relevance regions}

The scalar index $\lambda_n$ is complete for a symmetric relevance interval centered at the point around which the local experiment is developed. It is not sufficient for general moving asymmetric regions.

Let
\[
\mathcal R_n=[-\Delta_{L,n},\Delta_{U,n}].
\]
Define its half-width and center by
\[
h_n=\frac{\Delta_{L,n}+\Delta_{U,n}}{2},
\qquad
m_n=\frac{\Delta_{U,n}-\Delta_{L,n}}{2}.
\]
For sampling-error scale $s_n$, define
\[
\lambda_n=\frac{h_n}{s_n},
\qquad
u_n=\frac{\theta_n-m_n}{s_n}.
\]
Equivalently, define the standardized distances to the two relevance boundaries:
\begin{equation}
 d_{L,n}=\frac{\theta_n+\Delta_{L,n}}{s_n}=\lambda_n+u_n,
 \qquad
 d_{U,n}=\frac{\Delta_{U,n}-\theta_n}{s_n}=\lambda_n-u_n.
\label{eq:boundarydist}
\end{equation}
These two quantities are the fundamental coordinates.

Under a Gaussian approximation,
\begin{equation}
P_M=\Phi(-c-d_L)+\Phi(-c-d_U),
\qquad
P_N=\left[\Phi(d_U-c)-\Phi(c-d_L)\right]_+.
\label{eq:asymprob}
\end{equation}
The probability of negligibility is positive if and only if $d_L+d_U>2c$. Because $d_L+d_U=2\lambda$, this becomes $\lambda>c$. Thus fixed asymmetry under a symmetric confidence set introduces no fundamentally new finite-sample geometry: recentering at $m_n$ restores the symmetric formulas. The substantive complication appears when the two boundaries move at different rates.

\begin{proposition}[Total width does not determine resolvability under differential boundary rates]
\label{prop:asym}
Suppose $s_n=n^{-1/2}$,
\[
\Delta_{L,n}\asymp n^{-\gamma_L},
\qquad
\Delta_{U,n}\asymp n^{-\gamma_U},
\]
and $\gamma_L\ne\gamma_U$. Then $\lambda_n$ alone does not determine the limiting classification probabilities. The limits depend separately on $d_{L,n}$ and $d_{U,n}$.

For example, let $\theta_n=0$, $\gamma_L=0.9$, and $\gamma_U=0.2$. Then $\lambda_n\to\infty$, but
\[
d_{L,n}=n^{1/2-0.9}\to0,
\qquad
d_{U,n}=n^{1/2-0.2}\to\infty.
\]
Consequently,
\[
P_M\to\Phi(-c),
\qquad
P_N\to1-\Phi(c),
\qquad
P_U\to1-2\Phi(-c).
\]
For $c=z_{1-\alpha/2}$, these limits are $\alpha/2$, $\alpha/2$, and $1-\alpha$, respectively.
\end{proposition}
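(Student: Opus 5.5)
The plan is to work entirely inside the Gaussian representation \eqref{eq:asymprob}. That formula writes $P_M$, $P_N$ and $P_U$ as continuous functions of the pair $(d_{L,n},d_{U,n})$ on the extended half-line $[0,\infty]$, where $\Phi(-\infty)=0$ and $\Phi(\infty)=1$. The constraint $d_{L,n}+d_{U,n}=2\lambda_n$ from \eqref{eq:boundarydist} fixes only the sum of the two coordinates. The non-determination claim therefore reduces to one fact: two sequences can share the same limit of $\lambda_n$, here $+\infty$, yet produce different limiting probabilities. The explicit example will be that witness, and I will contrast it with a symmetric configuration such as $\gamma_L=\gamma_U=0.2$ and $\theta_n=0$. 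In that case $d_{L,n},d_{U,n}\to\infty$, so $P_N\to1$ and $P_M,P_U\to0$, which is the conclusion of Proposition~\ref{prop:uniform}.

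First I compute the coordinates. With $s_n=n^{-1/2}$, $\theta_n=0$, and (taking the constants in $\asymp$ equal to one for concreteness) $\Delta_{L,n}=n^{-0.9}$, $\Delta_{U,n}=n^{-0.2}$, I get $h_n\sim n^{-0.2}/2$. Hence $\lambda_n=\sqrt n\,h_n\asymp n^{0.3}\to\infty$. Next, $d_{L,n}=\sqrt n\,\Delta_{L,n}=n^{-0.4}\to0$ and $d_{U,n}=\sqrt n\,\Delta_{U,n}=n^{0.3}\to\infty$. If general constants are used in $\asymp$, these limits are unchanged.

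Then I pass to the limit in \eqref{eq:asymprob}, using continuity of $\Phi$:
\[
P_M=\Phi(-c-d_{L,n})+\Phi(-c-d_{U,n})\to\Phi(-c)+0.
\]
For negligibility, $d_{L,n}+d_{U,n}=2\lambda_n>2c$ eventually, so the positive part is inactive and
\[
P_N=\Phi(d_{U,n}-c)-\Phi(c-d_{L,n})\to1-\Phi(c).
\]
Finally $P_U=1-P_M-P_N\to\Phi(c)-\Phi(-c)=1-2\Phi(-c)$. Substituting $c=z_{1-\alpha/2}$ gives the limits $\alpha/2$, $\alpha/2$ and $1-\alpha$.

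The step needing most care is making the general claim precise. It says that the limits depend on $(d_L,d_U)$ separately, and I will phrase it as non-identifiability through $\lambda_n$. The argument rests on comparing the example with the symmetric case above: both have $\lambda_n\to\infty$ but their limits differ. A second subtlety is interpretive. The statement concerns the Gaussian approximation \eqref{eq:asymprob}, with $d_{L,n}$ and $d_{U,n}$ defined through the standardized estimator, so no uniform-resolution condition is required. If one instead wants an exact statement, the Gaussian location model with $s_n=\sigma/\sqrt n$ makes \eqref{eq:asymprob} exact for every $n$, and the same limits follow.
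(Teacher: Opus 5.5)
Your proof is correct and follows essentially the paper's own argument (Appendix A.6). There the Gaussian formulas are written in terms of $(d_{L,n},d_{U,n})$, one notes $d_{L,n}\to0$ and $d_{U,n}\to\infty$, and one passes to the limit; the non-determination claim rests on the implicit contrast with the $\lambda_n\to\infty$ behaviour of Proposition~\ref{prop:uniform}, which you make explicit. One small refinement: your comparison case $\gamma_L=\gamma_U=0.2$ lies outside the hypothesis $\gamma_L\ne\gamma_U$. To stay inside it, compare with, e.g., $\gamma_L=0.1$, $\gamma_U=0.2$, $\theta_n=0$, where both distances diverge and $P_N\to1$ while $\lambda_n\to\infty$.
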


Thus $\lambda_n\to\infty$ does not guarantee negligibility classification when one scientific boundary remains unresolved. The appropriate coordinates for moving asymmetric regions are $(d_{L,n},d_{U,n})$, or equivalently $(u_n,\lambda_n)$, rather than $\lambda_n$ alone.

If both relevance boundaries converge to zero, then $m_n\to0$, and quadratic-mean differentiability at zero is appropriate. If the relevance region instead converges to a fixed nonzero center $m$, the corresponding quadratic-mean differentiability and Fisher-information assumptions should be imposed at $m$.

\section{Beyond the square-root rate: what does and does not generalize}

The achievability result is more general than regular Gaussian asymptotics. Suppose an estimator has uniform resolution
\[
|\hat\theta_n-\theta|=O_{P_\theta}(a_n^{-1})
\]
over the relevant parameter classes, where $a_n\to\infty$. Then Proposition~\ref{prop:uniform} immediately yields the sufficient condition
\begin{equation}
a_n\Delta_n\to\infty
\label{eq:generalrate}
\end{equation}
for consistent separated relevance classification.

For regular estimators, $a_n=\sqrt n$, which returns $\Delta_n\gg n^{-1/2}$. For a cube-root estimator, $a_n=n^{1/3}$, so achievability instead requires $\Delta_n\gg n^{-1/3}$. An inferential target that would be easily resolved by a regular estimator may therefore remain unresolved under a slower nonregular rate.

However, the converse requires more care. An estimator's convergence rate alone cannot establish an information-theoretic impossibility result. The lower bound in Section~\ref{sec:boundary} arises from distances between the underlying statistical experiments. In nonregular settings, a matching lower boundary requires a Hellinger, contiguity, or limit-experiment analysis for the particular statistical model.

The general lesson is asymmetric:
\[
\text{estimation rate gives achievability; experiment distance gives impossibility.}
\]
This distinction prevents estimator-specific convergence results from being incorrectly interpreted as minimax information bounds.

\section{Significance saturation across heterogeneous true effects}

The previous sections concern a single scientific parameter. A second phenomenon appears across a population of studies or effects. Let $\Theta\sim G$ represent a distribution of true effects.

Suppose a point-null testing procedure rejects $H_0:\Theta=0$ with conditional probability $q_n(\theta)$, and assume
\[
q_n(\theta)\to1\quad\text{for every fixed }\theta\ne0,
\qquad
q_n(0)\to\alpha.
\]
Let $S_n$ denote statistical significance, and define the \emph{irrelevant discovery share}
\begin{equation}
IDS_n=P(|\Theta|<\Delta\mid S_n).
\label{eq:ids}
\end{equation}
Let $p_0=P(\Theta=0)$.

\begin{theorem}[Significance-saturation limit]
\label{thm:ids}
Under the conditions above,
\begin{equation}
IDS_n\longrightarrow
\frac{P(0<|\Theta|<\Delta)+\alpha p_0}
{1-p_0+\alpha p_0}.
\label{eq:idslimit}
\end{equation}
If $G$ is continuous, then $p_0=0$ and
\[
IDS_n\longrightarrow P(|\Theta|<\Delta).
\]
\end{theorem}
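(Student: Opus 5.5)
The plan is to write $IDS_n$ as a ratio of two integrals against $G$ and pass to the limit in each by dominated convergence. By Bayes' rule, with $P(S_n\mid\Theta=\theta)=q_n(\theta)$,
\[
IDS_n=\frac{\int \mathbf 1\{|\theta|<\Delta\}\,q_n(\theta)\,dG(\theta)}{\int q_n(\theta)\,dG(\theta)}.
\]
I will handle numerator and denominator separately and then take the ratio.

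For the numerator, split the region $|\theta|<\Delta$ into the atom $\{0\}$ and the set $\{0<|\theta|<\Delta\}$. The atom contributes $p_0 q_n(0)\to\alpha p_0$. On the remaining set, $q_n(\theta)\to1$ pointwise and $0\le q_n\le1$, so dominated convergence (the constant $1$ is $G$-integrable) gives convergence to $P(0<|\Theta|<\Delta)$. The denominator is treated the same way: split it into $\{0\}$ and $\{\theta\ne0\}$ to get the limit $\alpha p_0+(1-p_0)$.

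The only point needing care is that the limiting denominator is positive, so the ratio of limits is the limit of the ratio. I would note that $1-p_0+\alpha p_0>0$ unless $p_0=1$ and $\alpha=0$. That degenerate case, where $G$ is a point mass at zero and the test is asymptotically exact, must be excluded implicitly, since there $IDS_n$ may not even be defined. I would state this as a standing nondegeneracy assumption. Under it, the continuous-mapping step for ratios yields \eqref{eq:idslimit}.

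For the continuous case, $G$ has no atoms, so $p_0=0$. Substituting gives $IDS_n\to P(0<|\Theta|<\Delta)$, and this equals $P(|\Theta|<\Delta)$ because $\{\Theta=0\}$ is null.

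I expect no real obstacle here. The only subtlety is measurability of $q_n$ in $\theta$, which I will assume as part of "conditional probability," together with the nondegeneracy remark above.
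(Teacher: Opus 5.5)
Your proposal is correct and follows the paper's proof essentially verbatim: condition on $\Theta$, apply dominated convergence (using $0\le q_n\le 1$) separately to $P(S_n)$ and $P(|\Theta|<\Delta,S_n)$, and take the ratio of the limits. Your explicit requirement that the limiting denominator $1-p_0+\alpha p_0$ be positive, which excludes $p_0=1$ with $\alpha=0$, is a legitimate nondegeneracy condition that the paper leaves implicit.
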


\begin{proof}
By conditioning on $\Theta$,
\[
P(S_n)=\int q_n(\theta)\,dG(\theta).
\]
Because $0\le q_n(\theta)\le1$, dominated convergence gives
\[
P(S_n)\to1-p_0+\alpha p_0.
\]
Similarly,
\[
P(|\Theta|<\Delta,S_n)
=
\int_{|\theta|<\Delta}q_n(\theta)\,dG(\theta)
\]
converges to $P(0<|\Theta|<\Delta)+\alpha p_0$. Taking the ratio yields \eqref{eq:idslimit}. The continuous case follows when $p_0=0$.
\end{proof}

\subsection{Interpretation}

In a sufficiently precise environment, nearly every fixed nonzero effect is statistically detectable. Statistical significance therefore ceases to select effects mainly by their distance from zero. Instead, the distribution of significant effects approaches the distribution of nonzero effects already present in the scientific environment.

If many true effects lie inside $(-\Delta,\Delta)$, then many statistically significant results will eventually lie there as well. This is not a statement about false positives: these effects are genuinely nonzero. It is a statement about the difference between truth as departure from zero and importance as departure from a relevance region.

The finite-information path to this limit is not generally monotone. For the Gaussian point-null rule used in Figure~\ref{fig:ids}, consider first a low-information limit in which the statistical scale $s\to\infty$. For every fixed $\theta$, the rejection probability satisfies $q_s(\theta)\to\alpha$, so significance becomes nearly independent of the true effect. Dominated convergence then gives
\[
P(|\Theta|<\Delta\mid S_s)\longrightarrow P(|\Theta|<\Delta).
\]
For a continuous effect distribution, Theorem~\ref{thm:ids} gives the same value in the opposite high-information limit $s\to0$. Thus point-null significance can be least informative about scientific relevance at both extremes: at very low information it selects largely through sampling noise, whereas at very high information it selects nearly every nonzero effect. At intermediate information, significance is more selective for large $|\theta|$.

This mechanism produces the pronounced U-shaped curves in Figure~\ref{fig:ids}. With $\Delta=0.20$, the $N(0,0.10^2)$ curve reaches a minimum of approximately $0.822$ near $n=98$, compared with its limiting value $0.955$; the $N(0,0.30^2)$ curve reaches approximately $0.151$ near $n=37$, compared with $0.495$; and the Laplace distribution with scale $0.15$ reaches approximately $0.323$ near $n=48$, compared with $0.736$. These numerical minima are distribution-specific rather than a universal U-shape theorem, but they illustrate a general selection mechanism: there can be an intermediate information level at which point-null significance is most selective against scientifically negligible effects.

\begin{figure}[H]
\centering
\includegraphics[width=0.96\linewidth]{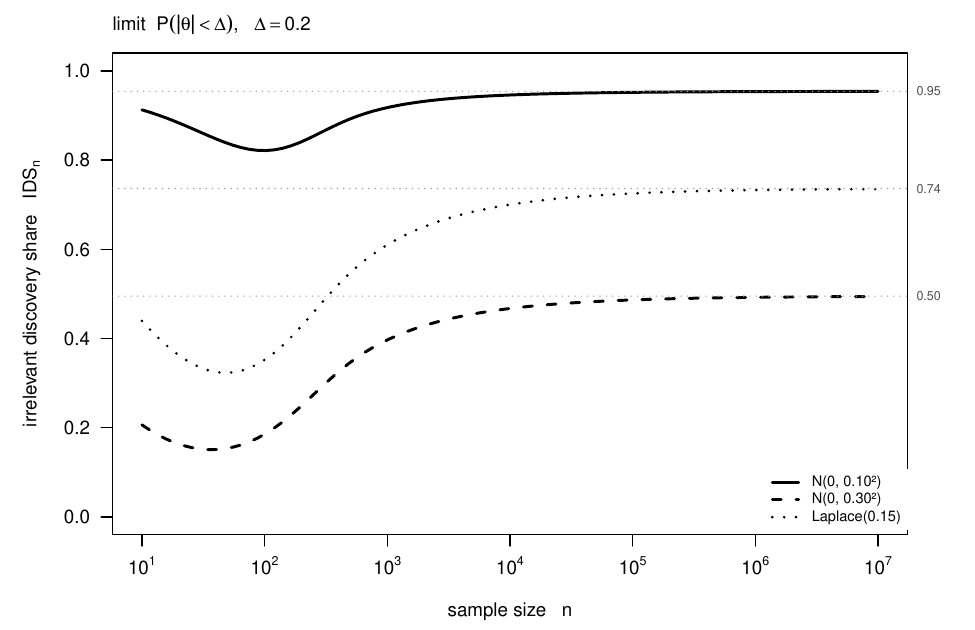}
\caption{\textbf{Intermediate-information selectivity and significance saturation.} The irrelevant discovery share $IDS_n=P(|\Theta|<\Delta\mid S_n)$ for several continuous effect distributions. The low-information limit, in which rejection is nearly independent of $\Theta$, and the high-information limit in Theorem~\ref{thm:ids} both equal $P(|\Theta|<\Delta)$ for continuous $G$. The numerical curves are U-shaped: at intermediate information, point-null significance is most selective for effects outside the scientific relevance region. The displayed minima are distribution-specific and are not asserted to be universal.}
\label{fig:ids}
\end{figure}

\section{Discussion}

Statistical power, effect size, and scientific relevance are often discussed as separate concepts. The present results show that they can instead be organized around a common question of resolution.

A regular statistical experiment has a natural local scale of order $n^{-1/2}$. Scientific interpretation supplies another scale, $\Delta_n$. Their Fisher-standardized ratio, $\lambda_n=\sqrt{nI_0}\Delta_n$, determines whether the scientific distinction can be learned from the experiment.

The minimax inequality in Theorem~\ref{thm:minimax} describes the loss of resolvability continuously over finite local information. When $\lambda_n\to0$, negligible and meaningful parameter classes merge statistically. No improved test, alternative confidence interval, Bayesian summary, or more elaborate classifier can recover information the experiment itself does not contain.

When $\lambda_n\to\kappa\in(0,\infty)$, the problem contains information, but only a finite amount on the local scale. Theorem~\ref{thm:minimax} prevents error from vanishing, while Proposition~\ref{prop:critical-exact} characterizes exactly how the separated composite problem is resolved in the Gaussian limit experiment. The optimal cutoff has a revealing endpoint behavior: $c^*\to1$ as $\kappa\downarrow0$, so the minimax rule retains a cutoff of one statistical-error unit even while the scientific boundary collapses toward zero. This yields a quadratic improvement over trivial risk, whereas the simpler scientific-boundary rule improves only cubically near that unresolved edge. At large $\kappa$, however, $c^*/\kappa\to1$ and the simple rule is asymptotically minimax. The exact finite-$\kappa$ calculation therefore complements rather than replaces the phase-transition result.

When $\lambda_n\to\infty$, scientific relevance becomes increasingly coarse relative to statistical uncertainty, and a uniformly rate-consistent estimator can classify effects lying a fixed fractional distance from the scientific boundary with probability tending to one. This is the sense in which the $n^{-1/2}$ relevance scale is a statistical resolution boundary.

\subsection{Relation to equivalence and relevant-hypothesis testing}

The framework is complementary to equivalence testing. Equivalence procedures answer questions such as whether the data support $|\theta|<\Delta$. Minimum-effect procedures ask whether the data support $|\theta|>\Delta$. Relevant-hypothesis procedures similarly construct tests around scientifically specified nonzero margins.

The present framework instead asks whether the chosen margin is statistically resolvable at the available information scale. The distinction is analogous to separating specification of a hypothesis from the information geometry of the experiment used to test it.

Romano's optimality theory is especially relevant here. Once the problem converges to a finite Gaussian experiment, optimal testing questions can be asked inside that experiment. The minimax-separation formulation of \citet{blanchard2018} is also close in decision criterion: it controls the worst-case sum of the two error probabilities while varying the geometric separation between composite hypotheses. The present framework adds the scientific-threshold interpretation and tracks that separation relative to the experiment's information scale. Accordingly, the exact Gaussian calculation in Proposition~\ref{prop:critical-exact} is best read as the critical-regime decision problem nested inside the broader relative-rate framework, not as a claim that Gaussian threshold optimization itself is new.

\subsection{Why large samples are not the problem}

Nothing in the theory implies that high precision is undesirable. If $0<|\theta|<\Delta$, then sufficiently precise data permit both $\theta\ne0$ and $|\theta|<\Delta$ to be established. The first statement is statistical. The second is scientific relative to the chosen relevance threshold. The difficulty arises only when rejection of a point null is interpreted as evidence for a different claim about substantive importance. More information improves our ability to distinguish those claims.

The heterogeneous-effect analysis adds a second qualification. Point-null significance need not become progressively less informative about scientific relevance as sample size increases. At very low information, significance is approximately independent of the true effect and therefore barely filters the population of effects. At very high information, it again filters little because nearly every nonzero effect is detected. The strongest enrichment for effects outside the relevance region can occur between these extremes. The U-shaped examples in Figure~\ref{fig:ids} therefore represent a selection phenomenon, not merely slow convergence to an asymptote.

\subsection{The relevance threshold is part of the scientific problem}

The framework does not determine $\Delta_n$. The relevance threshold must come from domain knowledge, decision consequences, measurement considerations, theory, policy objectives, or another scientifically justified criterion.

For some problems, a fixed threshold is natural. For others, $\Delta_n$ may move with measurement technology, population scale, cost, scientific ambition, or the resolution of substantive theory. Allowing $\Delta_n$ to vary is therefore not merely a mathematical device. It formalizes the possibility that the scientific target itself becomes increasingly demanding. The theory then asks whether statistical information improves quickly enough to keep pace.

\subsection{Boundary distance is more fundamental than region width}

The asymmetric analysis shows why a single width parameter can sometimes mislead. For $[-\Delta_{L,n},\Delta_{U,n}]$, the scientific decision at $\theta_n$ depends on $d_{L,n}$ and $d_{U,n}$. A large total width does not imply that both scientific boundaries are resolved. This distinction is likely to matter in applications involving noninferiority, asymmetric clinical margins, one-sided policy costs, or scientific regions whose two boundaries evolve differently with design or measurement.

\subsection{Limitations and extensions}

Several limitations are important. First, the minimax lower bound is derived for an i.i.d. scalar model that is quadratic-mean differentiable at the local center. Extensions to dependent observations, semiparametric models, and nonregular experiments require model-specific experiment-distance arguments.

Second, Proposition~\ref{prop:critical-exact} solves the Gaussian \emph{limit} experiment. Quadratic-mean differentiability and pointwise LAN are sufficient for the lower-bound transfer used in Theorem~\ref{thm:minimax}, but they do not by themselves imply $R_n^*\to R_\infty^*$ for the full composite classes. Such an upper transfer requires uniform local approximation together with control ensuring that alternatives outside bounded local neighborhoods are no harder. Corollary~\ref{cor:gaussian-finite} shows exact finite-sample attainability in the Gaussian location model, but we do not assert it for every regular model.

Third, the Uniform Resolution Condition remains a substantive assumption. The Gaussian and Bernoulli calculations above verify it in two basic models, but pointwise asymptotic normality alone does not imply the uniform control needed for Proposition~\ref{prop:uniform}; irregular and superefficient behavior can invalidate such a step.

Fourth, the scientific relevance region is treated as prespecified. If $\Delta_n$ is selected using the same data employed for inference, additional selection effects arise.

Fifth, a hard threshold is not always an adequate representation of scientific value. Some applications are better represented through continuous utility or loss functions. In those settings, the present relevance region can be interpreted as a simple decision boundary rather than a complete substantive utility model.

Finally, the theory here is scalar. Higher-dimensional relevance classification introduces genuinely different geometric and minimax questions. In particular, detection from a point null and classification relative to a composite relevance region need not share the same high-dimensional boundary, so such extensions require a separate minimax analysis.

\section{Conclusion}

The statistical significance of an effect and the scientific importance of an effect are governed by different distances. Point-null evidence is driven by $\theta/s_n$. Scientific relevance is driven by distance from the boundaries of a relevance region.

For a symmetric scientific threshold $\Delta_n$, the central scale is $\lambda_n=\Delta_n/s_n$. In a regular experiment this becomes $\lambda_n=\sqrt{nI_0}\Delta_n$.

The resulting classification problem obeys a statistical-resolution law. If $\lambda_n\to0$, meaningful and negligible effects can become statistically indistinguishable. If $\lambda_n\to\kappa\in(0,\infty)$, the problem retains finite but nonvanishing uncertainty: meaningful discrimination is possible, but consistency is not. In the Gaussian limit experiment the corresponding composite minimax rule is an absolute-value threshold $|Y|>c^*$, with $c^*\to1$ at the unresolved edge and $c^*/\kappa\to1$ as resolution improves. If $\lambda_n\to\infty$, scientific relevance can be classified consistently under suitable uniform estimation conditions.

For regular models, the boundary occurs at $\Delta_n\asymp n^{-1/2}$. Thus the central-limit scale is not only the scale of estimation error. It is also the scale separating scientific distinctions that the experiment can resolve from distinctions that it cannot.

The resulting principle is simple:
\[
\text{statistical evidence is distance measured in uncertainty units;}
\]
\[
\text{scientific relevance is distance measured from a substantive boundary.}
\]
A scientifically meaningful inferential analysis must keep both scales visible.

\clearpage
\begin{center}
{\Large\bfseries References}
\end{center}
\vspace{0.6em}
\printbibliography[heading=none]

\clearpage
\begin{center}
{\Large\bfseries Appendix}
\end{center}
\vspace{0.6em}

\subsection*{A.1 Total variation and binary testing}

Throughout the paper,
\[
\|P-Q\|_{\mathrm{TV}}=\sup_A|P(A)-Q(A)|.
\]
For every measurable $\phi:\mathcal X\to[0,1]$,
\[
|E_P\phi-E_Q\phi|\le\|P-Q\|_{\mathrm{TV}}.
\]
For testing two simple hypotheses $P$ versus $Q$,
\[
\inf_\phi\{E_P\phi+E_Q(1-\phi)\}
=
1-\|P-Q\|_{\mathrm{TV}}.
\]
This identity underlies the two-point minimax reduction in Theorem~\ref{thm:minimax}.

\subsection*{A.2 Gaussian total variation calculation}

Let $P=N(\mu_1,1)$ and $Q=N(\mu_2,1)$ with $\mu_2>\mu_1$. The densities cross at $(\mu_1+\mu_2)/2$. Consequently,
\[
\|P-Q\|_{\mathrm{TV}}
=
2\Phi\left(\frac{\mu_2-\mu_1}{2}\right)-1.
\]
For $\mu_1=(1-\varepsilon)\kappa$ and $\mu_2=(1+\varepsilon)\kappa$, this becomes $2\Phi(\varepsilon\kappa)-1$.

\subsection*{A.3 Hellinger proof at the unresolved boundary}

With the normalization $h^2(P,Q)=1-\int\sqrt{dP\,dQ}$, quadratic-mean differentiability at zero gives
\[
h^2(P_\theta,P_0)=\frac18I_0\theta^2+o(\theta^2).
\]
For independent products,
\[
h^2(P_\theta^n,P_0^n)=1-\{1-h^2(P_\theta,P_0)\}^n.
\]
Since $1-(1-x)^n\le nx$,
\[
h^2(P_{\theta_n}^n,P_0^n)
\le
n h^2(P_{\theta_n},P_0).
\]
If $\sqrt n\theta_n\to0$, the right-hand side converges to zero. Hence $h(P_{\theta_n}^n,P_0^n)\to0$, and $\|P-Q\|_{\mathrm{TV}}\le\sqrt2\,h(P,Q)$ implies merging in total variation.

\subsection*{A.4 Expansions for the exact critical-regime minimax rule}

Write $a=(1-\varepsilon)\kappa$ and $b=(1+\varepsilon)\kappa$. Taking logarithms in \eqref{eq:cstar} gives
\begin{equation}
\log\cosh(bc^*)-\log\cosh(ac^*)
=\frac{b^2-a^2}{2}
=2\varepsilon\kappa^2.
\label{eq:appendix-cstar-log}
\end{equation}
For $\kappa\downarrow0$, use
\[
\log\cosh z=\frac{z^2}{2}-\frac{z^4}{12}+O(z^6)
\]
and write $c^*=1+d\kappa^2+O(\kappa^4)$. Since
\[
b^2-a^2=4\varepsilon\kappa^2,
\qquad
b^4-a^4=8\varepsilon(1+\varepsilon^2)\kappa^4,
\]
substitution into \eqref{eq:appendix-cstar-log} yields
\[
2\varepsilon\kappa^2
+\left\{4\varepsilon d-\frac{2}{3}\varepsilon(1+\varepsilon^2)\right\}\kappa^4
+O(\kappa^6)
=2\varepsilon\kappa^2.
\]
Therefore $d=(1+\varepsilon^2)/6$, proving \eqref{eq:cstar-small}.

For the risk expansion, the symmetric mixture density satisfies, for fixed $y$,
\[
\frac{g_m(y)}{\varphi(y)}
=e^{-m^2/2}\cosh(my)
=1+\frac{m^2}{2}(y^2-1)+O(m^4).
\]
Because the minimax risk equals the equal-prior Bayes sum-error risk,
\[
R_\infty^*=1-\|g_b-g_a\|_{\mathrm{TV}}.
\]
Moreover,
\[
g_b(y)-g_a(y)
=2\varepsilon\kappa^2\varphi(y)(y^2-1)+O(\kappa^4),
\]
with an integrable remainder after multiplication by a Gaussian envelope. Hence
\[
\|g_b-g_a\|_{\mathrm{TV}}
=\varepsilon\kappa^2
\int_{-\infty}^{\infty}|y^2-1|\varphi(y)\,dy
+O(\kappa^4).
\]
Since
\[
\int_{-\infty}^{\infty}|y^2-1|\varphi(y)\,dy=4\varphi(1),
\]
we obtain
\[
R_\infty^*(\kappa,\varepsilon)
=1-4\varepsilon\varphi(1)\kappa^2+O(\kappa^4),
\]
which is \eqref{eq:rstar-small}.

For the large-$\kappa$ cutoff, use
\[
\log\cosh z=z-\log2+\log(1+e^{-2z}),\qquad z>0.
\]
Equation \eqref{eq:appendix-cstar-log} becomes
\[
2\varepsilon\kappa(c^*-\kappa)
=
\log(1+e^{-2ac^*})-\log(1+e^{-2bc^*}).
\]
The right-hand side is positive, so $c^*>\kappa$, and
\[
0<c^*-\kappa
\le
\frac{e^{-2ac^*}}{2\varepsilon\kappa}
\le
\frac{e^{-2(1-\varepsilon)\kappa^2}}{2\varepsilon\kappa}.
\]
Thus $c^*/\kappa\to1$.

\subsection*{A.5 The scientific-boundary rule and large-$\kappa$ comparison}

For $\phi_\kappa(Y)=\mathbf1\{|Y|>\kappa\}$, the false-positive probability at the least favorable negligible boundary $\mu=(1-\varepsilon)\kappa$ is
\[
\Phi(-\varepsilon\kappa)+\Phi\{-(2-\varepsilon)\kappa\}.
\]
At the least favorable meaningful boundary $\mu=(1+\varepsilon)\kappa$, the false-negative probability is
\[
\Phi(-\varepsilon\kappa)-\Phi\{-(2+\varepsilon)\kappa\}.
\]
Their sum is equation \eqref{eq:S}. Furthermore,
\[
\begin{aligned}
1-S_\varepsilon(\kappa)
&=
\int_{-\varepsilon\kappa}^{\varepsilon\kappa}\varphi(x)\,dx
-
\int_{(2-\varepsilon)\kappa}^{(2+\varepsilon)\kappa}\varphi(t)\,dt\\
&=
\int_{-\varepsilon\kappa}^{\varepsilon\kappa}
\{\varphi(x)-\varphi(x+2\kappa)\}\,dx>0.
\end{aligned}
\]
Taylor expansion at zero gives equation \eqref{eq:cubic}.

Finally, the two-point argument used in Theorem~\ref{thm:minimax} gives, directly in the Gaussian experiment,
\[
2\Phi(-\varepsilon\kappa)
\le R_\infty^*(\kappa,\varepsilon)
\le S_\varepsilon(\kappa).
\]
But
\[
S_\varepsilon(\kappa)-2\Phi(-\varepsilon\kappa)
=
\Phi\{-(2-\varepsilon)\kappa\}
-
\Phi\{-(2+\varepsilon)\kappa\},
\]
and this difference is $o\{\Phi(-\varepsilon\kappa)\}$ because $2-\varepsilon>\varepsilon$ for $0<\varepsilon<1$. The squeeze theorem therefore gives
\[
R_\infty^*(\kappa,\varepsilon)
\sim S_\varepsilon(\kappa)
\sim2\Phi(-\varepsilon\kappa),
\]
proving Corollary~\ref{cor:largekappa}.

\subsection*{A.6 Asymmetric classification formulas}

For the asymmetric region, $d_L=\lambda+u$ and $d_U=\lambda-u$. Meaningfulness occurs when either the lower end of the confidence interval exceeds the upper relevance boundary or the upper end lies below the lower relevance boundary. Under standardized Gaussian error $Z$,
\[
P_M=P(Z>d_U+c)+P(Z<-d_L-c),
\]
giving the first expression in \eqref{eq:asymprob}. Negligibility requires
\[
c-d_L<Z<d_U-c,
\]
which yields the second expression in \eqref{eq:asymprob}. For the sequence in Proposition~\ref{prop:asym}, $d_{L,n}\to0$ and $d_{U,n}\to\infty$, giving the displayed limits.

\end{document}